\newcommand{\be}{\begin{eqnarray}}
\newcommand{\ee}{\end{eqnarray}}
\newcommand{\ce}{\begin{eqnarray*}}
\newcommand{\de}{\end{eqnarray*}}
\newtheorem{thm}{Theorem}[section]
\newtheorem{lem}[thm]{Lemma}
\newtheorem{exa}[thm]{Example}
\theoremstyle{definition}
\newtheorem{defn}{Definition}[section]
\definecolor{wco}{rgb}{0.5,0.2,0.3}
\numberwithin{equation}{section}
\theoremstyle{remark}
\newtheorem{rem}{Remark}[section]
\def\R{\mathbb{R}}
\def\<{\langle} \def\>{\rangle}  
\def\d{\text{\rm{d}}}   
 \def\beq{\begin{equation}}  \def\F{\mathcal F}
 \def\P{\mathbf P} 
 \def\ee{\varepsilon}
\def\L{\Lambda}
\def\W{W_0^{m,2}}
\def\H{H^m}
\def\P{\mathcal{P}}
\def\[{{\Big[}}
\def\]{{\Big]}}
\def\({{\Big(}}
\def\){{\Big)}}
\title{{\bf   Local and global well-posedness of SPDE with generalized coercivity conditions
%Existence and uniqueness of solutions to stochastic evolution equations with
 % local  monotonicity and generalized coercivity conditions
}
%\footnote{Supported in part by  DFG through the
% SFB-701, the BiBoS-Research Center and IGK ``Stochastics and Real World Models''.}
}
\author{{\bf  Wei Liu $^a$\footnote{Corresponding author: weiliu@math.uni-bielefeld.de} ,
 Michael R\"{o}ckner $^{a,b}$
}\\
{\footnotesize  $a.$  Fakult\"at f\"ur Mathematik, Universit\"at Bielefeld,
D-33501 Bielefeld, Germany}\\
% \footnotesize{$b.$ School of Math. Sci. and Lab. Math. Com. Sys., Beijing Normal
% University, 100875 Beijing, China}
% \\
  \footnotesize{$b.$ Department of Mathematics and Statistics, Purdue University, West Lafayette, 47906 IN, USA}\\
}
\date{}
\begin{document}
\maketitle

\begin{abstract}
In this paper we establish   local and global  existence and uniqueness of solutions
for general  nonlinear  evolution equations with coefficients satisfying  some local monotonicity and generalized coercivity conditions.
 An analogous result is obtained  for  stochastic evolution equations in Hilbert  space with general additive noise.
As applications,  the main results  are  applied to obtain simpler proofs in known cases as the stochastic  3D Navier-Stokes equation,
 the tamed 3D Navier-Stokes equation and the Cahn-Hilliard equation, but also to get
 new results for stochastic surface growth PDE and stochastic power law fluids.
\end{abstract}
\noindent
 AMS Subject Classification:\ 60H15, 35K55, 34G20, 35Q30 \\
\noindent
 Keywords: local monotonicity; coercivity;  Navier-Stokes equation; surface growth model; Cahn-Hilliard equation; power law fluid.

\bigbreak

\section{Main results}
Let  $(H, \<\cdot,\cdot\>_H)$ be a real separable
Hilbert space and identified with its dual space $H^*$ by the Riesz
isomorphism. Let   $V$ be  a real reflexive  Banach space such that it is
 continuously and densely embedded into $H$. Then we have the
 following Gelfand triple
$$V \subseteq H\equiv H^*\subseteq V^*.$$
   If  $\<\cdot,\cdot\>_V$ denotes  the
 dualization
between  $V$ and its dual space $V^*$,  then it is easy to show that
$$ \<u, v\>_V=\<u, v\>_H, \ \  u\in H ,v\in V.$$
Now we consider the general nonlinear evolution equation
\begin{equation}\label{1.1}
 u'(t)=A(t,u(t)),  \ 0<t<T,  \ u(0)=u_0\in H,
\end{equation}
where $T>0$, $u'$ is the generalized derivative of $u$ on $(0,T)$ and $A:[0,T]\times V\rightarrow V^*$ is restrictedly   measurable, $i.e.$  for  each $dt$-version of
$u\in L^1([0,T]; V)$, $t\mapsto A(t,u(t))$ is $V^*$-measurable on $[0,T]$.

A classical result says that (\ref{1.1}) has a unique solution if $A$
satisfies the monotonicity  and coercivity conditions (see e.g.
\cite{Ba10,Br73,Li69, Sh97,Z90} for more detailed exposition and references). The proof is mainly based on the Galerkin
approximation and the Minty  (monotonicity) trick. In \cite{L11}, the existence and uniqueness result was established
by replacing the monotonicity condition with a local version (see $(H2)$ below).
The result was applied to many new fundamental  equations
within this variational framework such as Burgers type equations, 2D Navier-Stokes equation and
the  3D Leray-$\alpha$ model. One of the main steps
in the proof in \cite{L11} was to show that any operator satisfying  local monotonicity is pseudo-monotone.
One should remark that the notion of a  pseudo-monotone operator
is  one of the most important extensions of the notion of a monotone operator and it was
 first introduced
 by Br\'{e}zis in \cite{Br68}.  The prototype of a pseudo-monotone operator is the sum of a monotone operator and
a strongly continuous operator ($i.e.$ an
 operator maps a weakly convergent sequence into a strongly convergent sequence). Hence
the theory of  pseudo-monotone operators unifies both  monotonicity arguments and
compactness arguments  (cf. \cite{Sh97,Z90}).

Also for stochastic partial differential equations (SPDE), the above approach, also called
the variational approach,  has  been  used extensively by many authors.
 The existence and uniqueness of solutions for SPDE was first investigated by Pardoux \cite{Par75}, Krylov and
Rozovskii \cite{KR79}. We refer e.g. to \cite{G,RRW07,Zh09} for some further
generalizations.  In particular,
   the local monotonicity condition has  been  used to establish  well posedness for SPDE in \cite{LR10,BLZ}.
 For further references on  various types of properties established for SPDE
within the variational framework, we refer to \cite{CM10,GLR,LR10,Zh09}.

 In this work we  establish  existence, uniqueness and continuous dependence on initial conditions
 of solutions to (\ref{1.1})
by using  the local monotonicity condition (see $(H2)$ below) and the
 generalized coercivity condition $(H3)$ defined below.  An analogous result for
stochastic PDE with general additive noise is  also obtained.  The standard
growth condition on $A$ (cf. \cite{Ba10,KR79,Li69,Z90}) is also replaced by a much weaker condition
 such that the main result
can be applied to a larger class of examples.  This result seems new even in the finite dimensional case.
The main result can be applied
 to establish the local/global  existence and uniqueness of solutions for a large class of
 classical (stochastic)  nonlinear evolution equations
such as the  stochastic 2D and 3D Navier-Stokes
equation, the tamed 3D Navier-Stokes equation and the Cahn-Hilliard equation. Through our generalized framework we give
 new and significantly simpler proofs for all these well known results.
 Moreover,  the main result is  also applied to
 stochastic surface growth PDE  and stochastic  power law fluids to obtain some new existence and uniqueness results for these models (see Section 3 for more details).
We emphasize that by applying the main result we obtain both the known local existence and uniqueness of
strong solutions to the  stochastic 3D Navier-Stokes equation and new local existence and uniqueness results for stochastic surface growth PDE.
Here the meaning of  strong solution is in the  sense of both PDE and stochastic analysis.

In particular, the (stochastic) 2D and 3D Navier-Stokes
equation are now included in this extended variational framework  using the local monotonicity and
generalized coercivity condition.
The study of stochastic Navier-Stokes equations dates back to the work of Bensoussan and Temam \cite{BT73}.
Although we have quite satisfactory results for 2D stochastic Navier-Stokes equations
 such as well-posedness, small noise asymptotics and  ergodicity  (cf.\cite{LR10,CM10,HM06} and the references therein),
 the results for the three dimensional case are still quite incomplete
 due to the lack of uniqueness (cf.\cite{BP99,CP97,DD03,DO06,FG95,FR08,MR04,MR05}).
Concerning the existence of solutions,
  in \cite{FG95} Flandoli and Gatarek proved the existence of martingale solutions and stationary
solutions for any dimensional stochastic Navier-Stokes equations in a bounded domain.
Subsequently, Mikulevicius and Rozovskii in \cite{MR05}
showed the existence of martingale solutions to stochastic Navier-Stokes equations
in $\mathbb{R}^d (d\ge 2)$ under weaker assumptions on the coefficients.
%Concerning the strong solutions to 3D stochastic Navier-Stokes equations, we refer to \cite{BP99,MR04,GZ09,M09}.
%The existence and uniqueness of strong maximal local solution were derived in $W^{1,p} (p>3)$. Some results in $W^{1,2}$ were obtained
%in \cite{GZ09,M09} and a unique local strong $W^{1,2}$-solution was constructed.

Replacing  the standard coercivity assumption (i.e.  $g(x)=Cx$ in  $(H3)$ below) by a more general version
 is  motivated by many reasons.  One  motivation is trying  to investigate the 3D Navier-Stokes equation
by applying our new result since we know that the local monotonicity hold for both the 2D and  3D Navier-Stokes equation.
However, as pointed out in \cite{L11,LR10},  the growth condition (see $(H4)$ below) fails to hold for the  3D Navier-Stokes equation.
On  the other hand, inspired by a series of works on the stochastic tamed 3D Navier-Stokes equation \cite{RZZ,RZ09a,RZ09},
we realized that, instead of working on the usual Gelfand triple $H^1\subseteq H^0 \subseteq H^{-1}$ (see Section 3 for details),
one may use the following Gelfand triple
 $$H^2 \subseteq H^1 \subseteq H^{0} .$$
On this triple one can verify the growth condition and also the local monotonicity for 3D Navier-Stokes equation, but
the usual coercivity condition does not hold anymore. Therefore,  we introduce the generalized coercivity condition $(H3)$  in order to overcome
this difficulty.  However, under this general form of   coercivity   one is only   able to get the
 local existence and uniqueness of solutions.  We should remark that our main result can  also be applied to the
 tamed 3D Navier-Stokes equation to get the global existence and uniqueness of solutions (see Section 3 for more examples).

Another reason of using this generalized coercivity condition  is coming from the proof of existence and uniqueness results for
stochastic evolution equations with general additive type noise. It is well known that
   stochastic equations (see (\ref{SDE}) below) can be reduced to deterministic evolution
 equations with a random parameter by a standard transformation (substitution).
 Then one can apply the result that we have already established for  deterministic
equations (cf. \cite{L11}).
However, $(H3)$  with the form of  $g(x)=Cx$ fails to hold in some examples  due to the more general growth condition $(H4)$
(see the proof of Theorem \ref{T1.1}).
But in such cases one will see that $(H3)$ still holds  with a certain non-decreasing continuous function $g$
 (e.g. $g(x)=Cx^\gamma$ for some  $C, \gamma>0$).  We refer to Section 3  for many examples  only satisfying this
generalized coercivity condition.

Now let us formulate the precise conditions on the coefficients in (\ref{1.1}).

%\begin{hyp}
  Suppose for  fixed $\alpha>1, \beta\ge 0$  there exist constants $\delta>0$,
  $C$ and
 a positive function $f\in L^1([0,T];  \mathbb{R})$
 such that the following conditions hold for all $t\in[0,T]$ and $v,v_1,v_2\in V$.
 \begin{enumerate}
 \item [$(H1)$] (Hemicontinuity)
      The map  $ s\mapsto \<A(t,v_1+s v_2),v\>_V$ is  continuous on $\mathbb{R}$.

\item[$(H2)$] (Local monotonicity)
     $$  \<A(t,v_1)-A(t, v_2), v_1-v_2\>_V
     \le \left(f(t)+\rho(v_1)+\eta(v_2) \right)  \|v_1-v_2\|_H^2, $$
where $\rho,\eta: V\rightarrow [0,+\infty)$ are measurable
and locally bounded functions on $V$.

  \item [$(H3)$] (Generalized coercivity)
    $$ 2 \<A(t,v), v\>_V  \le  -\delta
    \|v\|_V^{\alpha}  +g\left(\|v\|_H^2\right)+ f(t),$$
where  $g:[0,\infty) \rightarrow [0,\infty)$ is a non-decreasing continuous function.

  \item[$(H4)$] (Growth)
   $$ \|A(t,v)\|_{V^*} \le \bigg(  f(t)^{\frac{\alpha-1}{\alpha}} +
   C\|v\|_V^{\alpha-1} \bigg)  \bigg( 1+ \|v\|_H^{\beta} \bigg).$$

\end{enumerate}
%\end{hyp}

\begin{rem}
(1) If $\rho=\eta\equiv 0$, $g(x)=Cx$ and $\beta=0$, then $(H1)$-$(H4)$ are the
classical monotonicity  and coercivity  conditions in \cite[Theorem
30.A]{Z90} (see also \cite{Ba10,KR79,Li69,PR07}).  It can be applied to
many quasilinear PDE such as porous medium equations and the $p$-Laplace
equation (cf.\cite{Z90,PR07}).

(2) If $f(t)\equiv C$ in $(H2)$ and $g(x)=Cx$ in $(H3)$,  existence and uniqueness  is obtained in \cite{L11} and the result
 is  applied to many examples such as  Burgers type equations,  the 2D Navier-Stokes equation, the 3D Leray-$\alpha$ model
 and the $p$-Laplace equation with non-monotone perturbations.
For readers  interested in stochastic partial differential equations we refer to
\cite{LR10,BLZ} where the existence and uniqueness of strong solutions  is established under another form of
local monotonicity condition (namely  $\rho\equiv0$).

(3) We remark that  $(H2)$ also covers other  non-Lipschitz conditions used in the literature (cf. e.g. \cite{FZ05}).
Moreover, with small modifications to the proof,  $(H3)$ can be replaced by the following slightly modified condition:
$$   2 \<A(t,v), v\>_V  \le  -\delta
    \|v\|_V^{\alpha}  +  h(t) g\left(\|v\|_H^2\right)+ f(t),$$
where $h:[0,T]\rightarrow[0, \infty)$ is an  integrable function.
\end{rem}

Now we can state the main result, which gives a more general framework to analyze various classes of nonlinear
evolution equations.

\begin{thm}\label{T1}
Suppose that $V \subseteq H$ is compact and $(H1)$-$(H4)$ hold.

(i) For any $u_0\in H $,
 there exists a constant $T_0\in (0, T]$ such that
 $(\ref{1.1})$ has a  solution  on $[0, T_0]$, i.e.
$$u\in L^\alpha([0,T_0];V)\cap C([0,T_0];H), \     u'\in  L^{\frac{\alpha}{\alpha-1}}([0,T_0];V^*)$$
and
$$ \<u(t), v\>_H=\<u_0,v\>_H + \int_0^t \<A(s,u(s)),  v\>_V d s , \ t\in[0,T_0], v\in V.  $$
Moreover,  if there exist nonnegative constants $C$ and $\gamma$ such that
\begin{equation}\label{c3}
  \rho(v)+\eta(v) \le C(1+\|v\|_V^\alpha) (1+\|v\|_H^\gamma), \
v\in V,
\end{equation}
 then the solution of $(\ref{1.1})$ is unique on $[0, T_0]$.

(ii) If  $(H3)$ holds with $g(x)=Cx$ for some constant $C$,  then  all assertions in (i) hold  on $[0, T]$ (i.e. $T_0=T$).
\end{thm}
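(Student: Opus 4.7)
\emph{Step 1 (Galerkin scheme and a priori bounds).} The first move would be a Galerkin approximation: fix a basis $\{e_i\}\subset V$ orthonormal in $H$ and with dense span in $V$ (available because $V\hookrightarrow H$ is compact), set $H_n=\operatorname{span}\{e_1,\dots,e_n\}$ with $H$-orthogonal projection $P_n$, and invoke (H1) and (H4) to produce, by standard finite-dimensional ODE theory, a local solution $u_n$ of $u_n'(t)=P_n A(t,u_n(t))$, $u_n(0)=P_n u_0$, in $H_n$. Applying the finite-dimensional chain rule to $\|u_n\|_H^2$ and using (H3) gives
$$\tfrac{d}{dt}\|u_n(t)\|_H^2 \le -\delta\|u_n(t)\|_V^\alpha + g(\|u_n(t)\|_H^2) + f(t).$$
Dropping the negative term and comparing with the scalar ODE $\psi'=g(\psi)+f$, $\psi(0)=\|u_0\|_H^2$---which, since $g$ is continuous and non-decreasing and $f\in L^1$, has a maximal solution on some $[0,T_0]\subseteq(0,T]$---yields $\sup_n\sup_{t\in[0,T_0]}\|u_n(t)\|_H^2\le\sup_{[0,T_0]}\psi<\infty$. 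Re-integrating and invoking (H4) then furnishes uniform bounds on $u_n$ in $L^\alpha([0,T_0];V)\cap L^\infty([0,T_0];H)$ and on $A(\cdot,u_n)$ in $L^{\alpha/(\alpha-1)}([0,T_0];V^*)$.

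\emph{Step 2 (compactness and weak limit).} By reflexivity I would extract a subsequence (not relabeled) such that $u_n\rightharpoonup u$ in $L^\alpha([0,T_0];V)$ and weak-$*$ in $L^\infty([0,T_0];H)$, and $A(\cdot,u_n)\rightharpoonup\tilde A$ in $L^{\alpha/(\alpha-1)}([0,T_0];V^*)$. The bound on $u_n'=P_n A(\cdot,u_n)$ in $L^{\alpha/(\alpha-1)}(V^*)$ combined with the compact embedding $V\hookrightarrow H$ gives, via the Aubin--Lions lemma, $u_n\to u$ strongly in $L^\alpha([0,T_0];H)$. The limit $u$ then lies in $C([0,T_0];H)$, satisfies $u(0)=u_0$, and solves $u'=\tilde A$ weakly in $V^*$.

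\emph{Step 3 (identification of $\tilde A$---the main obstacle).} This is the heart of the argument and uses (H2) via a weighted Minty-type trick as in [L11]. I would introduce the exponential weight
$$\Phi(t)=\exp\Bigl(-\int_0^t 2\bigl[f(s)+\rho(u(s))+\eta(u(s))\bigr]\,ds\Bigr),$$
combine the Galerkin energy identity for $\|u_n(t)\|_H^2$ with its limit analogue for $\|u(t)\|_H^2$ (obtained from Lions' chain rule on the triple $V\subseteq H\subseteq V^*$), and exploit the strong $L^\alpha(H)$-convergence of $u_n$ to derive
$$\int_0^{T_0}\Phi(t)\,\langle\tilde A(t)-A(t,v(t)),u(t)-v(t)\rangle_V\,dt\le 0$$
for every test $v\in L^\alpha([0,T_0];V)\cap L^\infty([0,T_0];H)$. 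Then taking $v=u+\varepsilon w$ and sending $\varepsilon\downarrow 0$, (H1) identifies $\tilde A=A(\cdot,u)$. The delicate point is that $\rho$ and $\eta$ are only assumed locally bounded on $V$, so values such as $\rho(u_n(s))$ cannot be passed to the limit directly; the symmetric form of (H2), with $\rho$ on one argument and $\eta$ on the other, is precisely what allows one always to evaluate $\rho,\eta$ on the already-fixed functions $u$ or $v$, which is what makes this passage go through.

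\emph{Step 4 (uniqueness and the global case).} For uniqueness under (c3), two solutions $u_1,u_2$ give, by the chain rule and (H2),
$$\|u_1(t)-u_2(t)\|_H^2\le\int_0^t 2\bigl[f(s)+\rho(u_1(s))+\eta(u_2(s))\bigr]\,\|u_1(s)-u_2(s)\|_H^2\,ds,$$
and (c3) combined with $u_i\in L^\alpha([0,T_0];V)\cap L^\infty([0,T_0];H)$ makes the coefficient integrable in $t$, so Gronwall's lemma yields $u_1\equiv u_2$. For part (ii), when $g(x)=Cx$ the comparison ODE $\psi'=C\psi+f$ is linear and $\psi$ is finite on all of $[0,T]$, so all the a priori bounds of Step~1 persist on the whole interval and the construction yields $T_0=T$.
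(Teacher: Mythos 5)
Your overall skeleton (Galerkin, coercivity-driven a priori bound on a short interval, weak limits, Minty-type identification, Gronwall for uniqueness) matches the paper, and Steps 1, 2 and 4 are essentially sound --- the comparison-ODE argument is an acceptable substitute for the Bihari inequality the paper uses, and the uniqueness and $g(x)=Cx$ arguments are exactly the paper's. However, Step 3 contains a genuine gap, and it is precisely at the point you flag as ``delicate.'' The weighted Minty trick with the weight $\Phi(t)=\exp\bigl(-\int_0^t 2[f+\rho(u)+\eta(u)]\,ds\bigr)$ requires you to apply $(H2)$ to the pair $(u_n(t),v(t))$, which produces the coefficient $f(t)+\rho(u_n(t))+\eta(v(t))$; swapping the roles produces $f(t)+\rho(v(t))+\eta(u_n(t))$. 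Either way one of $\rho,\eta$ is evaluated at the \emph{varying} Galerkin iterate $u_n(t)$, not at a fixed function, and since $\rho,\eta$ are only locally bounded on $V$ while $u_n\rightharpoonup u$ merely weakly in $L^\alpha(V)$, the term $\rho(u_n(\cdot))$ (or $\eta(u_n(\cdot))$) cannot be passed to the limit, nor absorbed into a weight independent of $n$. Your claim that the symmetric form of $(H2)$ lets you ``always evaluate $\rho,\eta$ on the already-fixed functions'' is false when both $\rho$ and $\eta$ are nontrivial; the weighted-exponential argument is exactly the one used in the earlier work where $\rho\equiv 0$, and the whole point of the present theorem is to go beyond that case. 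The paper's resolution is different: it proves (via the compact embedding $V\subseteq H$, which is assumed for precisely this purpose and not merely for an Aubin--Lions step) that $(H1)$ and $(H2)$ make $A(t,\cdot)$ pseudo-monotone, and then runs an integrated, pointwise-in-$t$ version of Minty's lemma (Claims~1 and~2 in Lemma \ref{L2.5}): one first shows $\limsup_n\<A(t,u_n(t)),u_n(t)-u(t)\>_V\le 0$ for every $t$ using $(H3)$--$(H4)$ and pseudo-monotonicity, then upgrades this to a.e.\ convergence along a subsequence and applies pseudo-monotonicity and Fatou again to identify $w=A(\cdot,u)$.

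A secondary, fixable gap is in Step 1: hemicontinuity $(H1)$ plus the growth bound $(H4)$ do not by themselves give continuity of $u\mapsto P_nA(t,u)$ on $H_n$, which Carath\'eodory's theorem needs. The paper obtains this continuity as demicontinuity of $A(t,\cdot)$, derived again from the pseudo-monotonicity established via $(H1)$, $(H2)$ and the compactness of $V\subseteq H$. So the pseudo-monotonicity machinery is needed twice --- once to solve the finite-dimensional equations at all, and once to pass to the limit --- and your proposal omits it in both places.
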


\begin{rem}\label{R1}
(1) In the proof one can see that  $T_0$ is a constant depending on $u_0, f$ and $g$.
More precisely, one can take any constant  $T_0$ which  satisfies the following property:
$$  0<T_0\le T \ \ \text{and }\ \ T_0<\sup_{x\in(0,\infty)} G(x)- G\left(\|u_0\|_H^2+\int_0^{T_0} f(s) d s \right) , $$
where the function $G$ is the one defined in Lemma \ref{Bihari inequality}.

In particular, if $g(x)=c_0 x^\gamma (\gamma\ge 1)$, then one can take any $T_0\in(0, T]$ satisfying
$$T_0 < \frac{c_0}{\left(\gamma-1\right)\left(\|u_0\|_H^2 +\int_0^{T_0}f(s) d s \right)^{\gamma-1}  }.  $$

(2) If $\rho\equiv 0$ or $\eta\equiv 0$ in $(H2)$, then the compactness assumption of $V\subseteq H$ can be
removed by using a different  proof (cf. \cite{LR10}).
Therefore, the result can also be applied to many nonlinear evolution equations with unbounded underlying domains.

% (4)  One can get the global existence and uniqueness of solutions to (\ref{1.1}) for the case of
% $g(x)=Cx$ in $(H3)$.  One should note that it is also possible to use other form of $g$ in $(H3)$
% to get the same conclusion (i.e. $T_0=T$). This will be the subject for future  investigation.
\end{rem}

The next result shows the continuous dependence of solution of $(\ref{1.1})$ on the initial condition  $u_0$.
\begin{thm}\label{T2}
Suppose that $V \subseteq H$ is compact and $(H1)$-$(H4)$ hold, and $u_i$ are  solutions of $(\ref{1.1})$ on $[0,T_0]$
for initial conditions $u_{i,0}\in H$, $i=1,2$ respectively and
 satisfying %(\textbf{the following condition is assumed in order to apply Gronwall's lemma. Otherwise, in %the standard uniqueness argument, it will appear the problem of $0\times \infty$.})
$$ \int_0^{T_0}\left( \rho(u_1(s))+\eta(u_2(s)) \right)  d s<\infty.  $$
Then there exists a constant $C$ such that
\begin{equation}
 \begin{split}
      \|u_1(t)-u_2(t)\|_H^2
\le   \|u_{1,0}-u_{2,0}\|_H^2   \exp\left[\int_0^t \left( f(s)+\rho(u_1(s))+\eta(u_2(s)) \right) d s  \right], \ t\in[0, T_0].
 \end{split}
\end{equation}
\end{thm}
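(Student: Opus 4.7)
The plan is a short and direct Gronwall argument based on the local monotonicity hypothesis $(H2)$. Writing $w(t) := u_1(t) - u_2(t)$, by the definition of a solution to $(\ref{1.1})$ both $u_i$ lie in $L^\alpha([0,T_0];V) \cap C([0,T_0];H)$ with $u_i' \in L^{\alpha/(\alpha-1)}([0,T_0]; V^*)$. The classical chain rule for trajectories in this Gelfand triple class therefore applies to $t \mapsto \|w(t)\|_H^2$, and yields that this map is absolutely continuous on $[0, T_0]$ with
\begin{equation*}
\frac{d}{dt}\|w(t)\|_H^2 = 2\<u_1'(t) - u_2'(t), w(t)\>_V = 2\<A(t,u_1(t)) - A(t,u_2(t)), w(t)\>_V
\end{equation*}
for almost every $t \in [0,T_0]$.

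Next I would apply $(H2)$ pointwise to bound the right-hand side by
\begin{equation*}
2\bigl(f(t) + \rho(u_1(t)) + \eta(u_2(t))\bigr)\|w(t)\|_H^2.
\end{equation*}
The integrability assumption of the theorem, combined with $f \in L^1([0, T_0])$, guarantees that the coefficient $\varphi(t) := 2\bigl(f(t) + \rho(u_1(t)) + \eta(u_2(t))\bigr)$ lies in $L^1([0,T_0])$. Hence the classical Gronwall lemma for absolutely continuous functions delivers
\begin{equation*}
\|w(t)\|_H^2 \le \|w(0)\|_H^2 \exp\left[\int_0^t \varphi(s)\,ds\right], \qquad t \in [0, T_0],
\end{equation*}
which is precisely the claimed estimate, with the constant $C$ in the statement absorbing the factor $2$ in the exponent.

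There is no real obstacle in this proof. The only nontrivial ingredient is the chain rule in the Gelfand triple, which is standard (see e.g. Temam or Zeidler) and in any case already implicitly underlies the proof of Theorem \ref{T1}. The role of the extra integrability hypothesis $\int_0^{T_0}(\rho(u_1(s)) + \eta(u_2(s)))\,ds < \infty$ is exactly to make Gronwall's lemma applicable; the mere local boundedness of $\rho, \eta$ on $V$ from $(H2)$ does not suffice a priori since the trajectories $u_i(t)$ need not be bounded in $V$ as $t$ varies. Note that when the stronger growth condition $(\ref{c3})$ holds (so that uniqueness holds in Theorem \ref{T1}), this integrability is automatic from $u_i \in L^\alpha([0,T_0];V) \cap C([0,T_0];H)$, so the present theorem cleanly complements the uniqueness part of Theorem \ref{T1}.
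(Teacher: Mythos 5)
Your proof is correct and follows essentially the same route as the paper: the integration-by-parts (chain rule) formula for $\|u_1(t)-u_2(t)\|_H^2$ in the Gelfand triple, the pointwise application of $(H2)$, and Gronwall's lemma, with the stated integrability hypothesis ensuring the Gronwall coefficient is in $L^1$. Your remark about the factor $2$ being absorbed into the constant is also apt, since the paper's own computation silently drops this factor.
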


%%%%%%%%%%%%%%%%%%%%%%%%%%%%%%%%%%%%%%%%%%%%%%%%%%%%%%%%%%%%%%%%%%%%%%%%

Now we formulate the analogous result for SPDE  in Hilbert space with  additive type
noise. Suppose that $U$ is a Hilbert space and
$W(t)$  is  a $U$-valued cylindrical Wiener process  defined on a
filtered probability space $(\Omega,\mathcal{F},\mathcal{F}_t,\mathbb{P})$.
We consider the following type of stochastic evolution equations  on $H$,
\begin{equation}\label{SDE}
d X(t)=\left[ A_1(t, X(t))+ A_2(t,X(t)) \right] dt  + B(t) d W(t), \ 0< t< T, \  X(0)=X_0,
\end{equation}
where  $A_1,A_2:[0,T]\times V\rightarrow V^*$ and $B: [0,T]\rightarrow L_2(U; H)$ (here $(L_2(U; H), \|\cdot\|_2)$
denotes the space of all Hilbert-Schmidt operators from $U$ to $H$) are measurable.

Now we give the definition of a  local solution to (\ref{SDE}).  We use $\tau$ to denote a stopping time in
the  filtered probability space $(\Omega,\mathcal{F},\mathcal{F}_t,\mathbb{P})$.
\begin{defn}\label{def:soln} An $H$-valued  $\mathcal{F}_t$-adapted process $\{X(t)\}_{t\in [0,\tau]}$ is called a local  solution
of $(\ref{SDE})$ if $X(\cdot,\omega) \in L^1([0,\tau(\omega)]; V) \cap L^2([0,\tau(\omega)]; H)$ and   $\mathbb{P}$-$a.s.\ \omega \in \Omega$,
  $$X(t)=X_0+\int_0^t \left[ A_1(s, X (s)) + A_2(s,X (s)) \right] d s + \int_0^t B(s) d W(s), \ 0<t<\tau(\omega), $$
where $\tau$ is a stopping time satisfying $\tau(\omega)>0$, $\mathbb{P}$-$a.e.\  \omega \in \Omega $ and
$X_0\in L^2(\Omega\rightarrow H; \mathcal{F}_0; \mathbb{P})$.
\end{defn}

\begin{thm}\label{T1.1}
Suppose that $V \subseteq H$ is compact, $A_1$ satisfies $(H1)$-$(H4)$ with $\rho\equiv0$, $\beta=0$ and $g(x)=C x$,
$A_2$ satisfies $(H1)$-$(H4)$,
   $B \in L^2([0,T]; L_2(U; H))$, and
there exist nonnegative  constants $C$ and $\gamma$ such that
\begin{equation*}
\begin{split}
& \rho(u+v) \le C (\rho(u) + \rho(v)), \  u,v\in V; \\
& \eta(u+v) \le C( \eta(u) + \eta(v)  ),  \  u,v\in V; \\
&  \rho(v)+\eta(v) \le C(1+\|v\|_V^\alpha) (1+\|v\|_H^\gamma), \
v\in V.
\end{split}
\end{equation*}
Then  for any $X_0\in L^2(\Omega\rightarrow H; \mathcal{F}_0; \mathbb{P})$, there exists   a unique local solution  $\{X(t)\}_{t\in [0,\tau]}$ to
 $(\ref{SDE})$  satisfying
$$X(\cdot)\in L^\alpha([0, \tau];V)\cap C([0, \tau];H),\ \mathbb{P}\text{-}a.s..$$
Moreover, if $g(x)=Cx$ in $(H3)$ and $\alpha\beta\le 2$,  then  all assertions above hold for  $\tau\equiv T$.
\end{thm}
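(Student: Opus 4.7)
The plan is to reduce $(\ref{SDE})$ pathwise to a deterministic evolution equation of the form $(\ref{1.1})$ and then invoke Theorems \ref{T1} and \ref{T2}. Define the stochastic convolution $Z(t):=\int_0^t B(s)\,dW(s)$ (or, if additional $V$-regularity is needed for the pathwise analysis, a smoothed variant built from the dissipative part of $A_1$); since $B\in L^2([0,T];L_2(U;H))$, $Z$ has continuous $H$-valued paths $\mathbb{P}$-a.s. Setting $Y(t):=X(t)-Z(t)$, an $\mathcal{F}_t$-adapted process $X$ is a local solution of $(\ref{SDE})$ on $[0,\tau]$ iff, $\mathbb{P}$-a.s., $Y$ solves the random deterministic equation
\[
 Y'(t)=\widetilde A(t,Y(t),\omega):=A_1(t,Y(t)+Z(t,\omega))+A_2(t,Y(t)+Z(t,\omega)),\quad Y(0)=X_0,
\]
on $[0,\tau(\omega)]$.

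Fix $\omega$ outside a $\mathbb{P}$-null set and verify pathwise that $\widetilde A(\cdot,\cdot,\omega)$ satisfies $(H1)$--$(H4)$. Hemicontinuity is immediate from $(H1)$ for $A_1$ and $A_2$. For local monotonicity $(H2)$, the subadditivity hypotheses give $\rho(v_i+Z)+\eta(v_j+Z)\le C(\rho(v_i)+\eta(v_j))+C(\rho(Z(t,\omega))+\eta(Z(t,\omega)))$, and the last term is absorbed into a new $\omega$-dependent coefficient $\widetilde f(t,\omega):=f(t)+C(\rho(Z(t))+\eta(Z(t)))$; the polynomial bound $\rho(v)+\eta(v)\le C(1+\|v\|_V^\alpha)(1+\|v\|_H^\gamma)$ evaluated at $Z(t,\omega)$ ensures $\widetilde f(\cdot,\omega)\in L^1([0,T])$ for $\mathbb{P}$-a.e.\ $\omega$, provided the chosen $Z$ is sufficiently $V$-regular.

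For $(H3)$ and $(H4)$ of $\widetilde A$ I would split $\langle A_i(t,v+Z),v\rangle_V=\langle A_i(t,v+Z),v+Z\rangle_V-\langle A_i(t,v+Z),Z\rangle_V$, apply the original $(H3)$ and $(H4)$ to the two pieces, and use Young's inequality to absorb the cross term $C\|v+Z\|_V^{\alpha-1}\|Z\|_V(1+\|v+Z\|_H^\beta)$ into a fraction of the coercivity $-\delta\|v+Z\|_V^\alpha$, leaving a forcing term of the form $\|Z\|_V^\alpha(1+\|v+Z\|_H^\beta)^\alpha$. Since $A_1$ has $\beta=0$ and $g(x)=Cx$, its contribution to $\widetilde g$ stays linear; the $A_2$ piece, after expansion in $v$ and $Z$, produces an $\|v\|_H^{\alpha\beta}$ dependence, so altogether $\widetilde A$ satisfies the generalized coercivity $(H3)$ with some non-decreasing $\widetilde g$ and an $\omega$-dependent but locally integrable forcing, together with a growth bound of the form $(H4)$.

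With $(H1)$--$(H4)$ established pathwise, Theorem \ref{T1}(i) applied to $\widetilde A(\cdot,\cdot,\omega)$ gives, for $\mathbb{P}$-a.e.\ $\omega$, a $T_0(\omega)>0$ and a solution $Y(\cdot,\omega)\in L^\alpha([0,T_0];V)\cap C([0,T_0];H)$; setting $X:=Y+Z$ produces the local solution of $(\ref{SDE})$. Pathwise uniqueness on $[0,T_0]$ follows from Theorem \ref{T2} applied to $\widetilde A$, the polynomial growth bound on $\rho,\eta$ supplying the required integrability. The explicit form of $T_0$ in Remark \ref{R1}, depending only on $\|X_0\|_H$, $f$ and the norms of $Z$, together with the $\mathcal{F}_t$-measurability of these quantities, shows that $\tau:=T_0$ can be chosen as an $\mathcal{F}_t$-stopping time. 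For the global statement I invoke Theorem \ref{T1}(ii): when $g$ is originally linear and $\alpha\beta\le 2$, the elementary inequality $\|v\|_H^{\alpha\beta}\le 1+\|v\|_H^2$ keeps the $\widetilde g$ produced above linear as well, so Theorem \ref{T1}(ii) yields $T_0(\omega)=T$ a.s. I expect the hardest technical point to be arranging enough regularity of the stochastic convolution $Z$ for the forcing terms $\widetilde f(\cdot,\omega)$ and $\|Z(\cdot,\omega)\|_V^\alpha$ generated by the substitution to be locally integrable in $t$ for a.e.\ $\omega$; this is exactly what makes the condition $\alpha\beta\le 2$ indispensable in the global part.
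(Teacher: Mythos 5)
Your overall strategy (pathwise reduction to a deterministic equation of type (\ref{1.1}) followed by an application of Theorem \ref{T1}) is the same as the paper's, but your primary decomposition breaks down at exactly the point you flag as ``the hardest technical point'', and that point is not a technicality: it is the central idea of the proof. You subtract the plain stochastic convolution $Z(t)=\int_0^t B(s)\,dW(s)$. Under the stated hypothesis $B\in L^2([0,T];L_2(U;H))$ the process $Z$ only has $H$-valued paths; it need not take values in $V$ at all. Consequently the shifted operators $A_i(t,v+Z(t))$ are not even defined (the $A_i$ map $V$ into $V^*$), and the quantities $\rho(Z(t))$, $\eta(Z(t))$, $\|Z(t)\|_V^{\alpha}$ appearing in your candidate $\widetilde f$ and in your $(H3)$/$(H4)$ estimates are meaningless; no application of Young's inequality can absorb them. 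The paper's resolution --- and the reason the theorem assumes that $A_1$ separately satisfies $(H1)$--$(H4)$ with $\rho\equiv 0$, $\beta=0$, $g(x)=Cx$ --- is to take as auxiliary process not the stochastic convolution but the solution $Y$ of the full SPDE $dY=A_1(t,Y)\,dt+B\,dW$, $Y(0)=0$. By \cite[Theorem 1.1]{LR10} this $Y$ exists, is unique, and lies in $L^{\alpha}([0,T];V)\cap C([0,T];H)$ almost surely; it is this $V$-regularity, inherited from the coercivity of $A_1$, that makes all subsequent estimates for $\widetilde A(t,v)=A_1(t,v+Y(t))-A_1(t,Y(t))+A_2(t,v+Y(t))$ go through. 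Note also the extra term $-A_1(t,Y(t))$, absent from your $\widetilde A$, which arises because $Y$ absorbs the $A_1$ drift and which must be estimated via $(H4)$ for $A_1$ (harmless since $\beta=0$ there). This is precisely the content of Remark \ref{r1}(2). Your parenthetical mention of a ``smoothed variant built from the dissipative part of $A_1$'' points in the right direction, but the proposal neither constructs such a process nor proves it has the required regularity, so the reduction is not actually carried out.

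The remaining steps you outline --- verification of $(H2)$ via the subadditivity of $\rho$ and $\eta$, integrability of the new $\widetilde f$ from the polynomial bound on $\rho+\eta$, the splitting $\<A_i(t,v+Y),v\>_V=\<A_i(t,v+Y),v+Y\>_V-\<A_i(t,v+Y),Y\>_V$ for $(H3)$ and $(H4)$, and the role of $\alpha\beta\le 2$ in keeping the coercivity linear in $\|v\|_H^2$ for the global statement --- do match the paper's computations once the correct auxiliary process $Y$ is in place.
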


\begin{rem}\label{r1}
(1) The main idea of the proof is to use a transformation to reduce SPDE (\ref{SDE})
to a deterministic evolution
 equation (with  some random parameter)  which Theorem \ref{T1} can be applied to. More precisely,
 we consider the  process $Y$ which solves  the following SPDE:
 \begin{equation}\label{equation for Y}
  d Y(t)= A_1(t, Y(t)) dt  + B(t) d W(t), \ 0< t< T, \  Y(0)=0.
  \end{equation}
Since $A_1$ satisfies $(H1)$-$(H4)$ with $\rho\equiv0$ and $g(x)=C x$, then the existence and uniqueness of
$Y(t)$ follows from  Theorem 1.1 in \cite{LR10}. Let $u(t)=X(t)-Y(t)$, then it is easy to show that
$u(t)$ satisfies a deterministic evolution equation of type (\ref{1.1}) for each fixed $\omega\in \Omega$.

(2) Unlike in \cite{GLR}, here  we do not need to assume the noise to take values in $V$ (i.e. $B\in L_2(U; V)$).
The reason is that here we use the auxiliary process $Y$ instead of subtracting the noise part  directly as in \cite{GLR} and that $A_1\neq 0$ because it satisfies $(H3)$.

(3) One can replace the Wiener process  $W(t)$ in (\ref{SDE})  by  a L\'{e}vy type noise $L(t)$.
Then the existence and uniqueness of solutions to (\ref{equation for Y}) can be obtained from
the main result  in \cite{BLZ}, and the rest of the proof can be carried out similarly.

More generally, one might  replace   $W(t)$ in (\ref{SDE})  by a $U$-valued adapted stochastic process  $N(t)$
  with c\`{a}dl\`{a}g paths.
$N(t)$ can be various types of noises here. For instance,
 one can take $N(t)$ as  cylindrical Wiener process, fractional Brownian motion
or  L\'{e}vy process (cf.\cite{GLR}).  This subject and some further applications will be investigated
in future work.

(4) Comparing with the result obtained in \cite{LR10,BLZ}, the  Theorem above can be  applied  to
SPDE with more general drifts  (see Section 3 for many examples) provided the noise is of  additive type.
On the other hand,  the result in \cite{LR10,BLZ} is
 applicable  to SPDE with general  multiplicative  Wiener  noise or L\'{e}vy noise  if $\rho\equiv 0$ in $(H2)$ and $g(x)=Cx$ in $(H3)$.
\end{rem}

%%%%%%%%%%%%%%%%%%%%%%%%%%%%%%%%%%%%%%%%%%%%%%%%%%%%%%%%%%%%%%%%%%%%%%%%%%%%%%%%%%%%%%%%%%%5

The rest of the paper is organized as follows. The proofs of the main results are given in the next section.
In Section 3 we apply the main results to several concrete (stochastic) semilinear and quasilinear evolution
equations  in Banach space. Throughout the paper,  we use $C$ to denote some   generic constant which might change from line to line.

\section{Proofs of The Main Theorems}

\subsection{Proof of Theorem \ref{T1}}

We will first consider the Galerkin approximation to (\ref{1.1}).  However, even in the finite dimensional case,
 the existence and uniqueness of solutions to (\ref{1.1}) seems not obvious because of the local monotonicity $(H2)$
 and the generalized coercivity condition $(H3)$. Here we prove it by using a
classical existence theorem of Carath\'{e}odory for ordinary differential equations.
Another difference  is that we can not  apply Gronwall's lemma directly for
this  general form of coercivity condition $(H3)$.  Instead, we will use  Bihari's inequality,
which is a  generalized
version of Gronwall's lemma (cf.\cite{B56,RZ}).
\begin{lem}\label{Bihari inequality}
(Bihari's inequality) Let $g:(0,\infty)\rightarrow (0,\infty)$ be a non-decreasing continuous function.
If $p$, $q$ are two positive functions on $\mathbb{R}^+$ and $K\ge 0$ is a constant such that
$$ p(t) \le K  +\int_0^t q(s)g\left(p(s) \right) d s,\ t\ge 0.  $$

(i) Then we have
\begin{equation}\label{Bihari}
   p(t)\le G^{-1}\left( G\left( K \right)+\int_0^t q(s) d s  \right),\  0\le t\le T_0,
\end{equation}
where $G(x):=\int_{x_0}^x \frac{1}{ g(s) } d s$ is well defined for some $x_0>0$, $G^{-1}$ is the inverse function and
$T_0\in(0, \infty)$ is a constant such that  $G( K )+\int_0^{T_0} q(s) d s$ belongs to the domain of $G^{-1}$.

(ii) If $K=0$ and there exists some $\varepsilon>0$ such that
$$      \int_0^\varepsilon  \frac{1}{g(s)}  d s=+\infty,   $$
then $p(t)\equiv 0 $.
\end{lem}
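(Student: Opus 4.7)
The plan is the classical majorization-and-separation-of-variables argument. Introduce the absolutely continuous majorant $\phi(t):=K+\int_0^t q(s)g(p(s))\,d s$, so that $p(t)\le\phi(t)$ by hypothesis and $\phi'(t)=q(t)g(p(t))$ for a.e.\ $t$ (the product $q\,g(p)$ is locally integrable since $\phi$ is finite on $[0,T_0]$). Monotonicity of $g$ together with $p\le\phi$ gives $g(p(t))\le g(\phi(t))$, and so $\phi$ satisfies the autonomous scalar differential inequality
\[ \phi'(t)\le q(t)\,g(\phi(t)), \quad \text{a.e.\ } t\in[0,T_0]. \]

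In the case $K>0$, $\phi(t)\ge K>0$ throughout, so we may divide by $g(\phi(t))>0$ and integrate from $0$ to $t$; the substitution $u=\phi(s)$ turns the left-hand side into $G(\phi(t))-G(K)$, producing
\[ G(\phi(t))-G(K)=\int_0^t\frac{\phi'(s)}{g(\phi(s))}\,d s\le\int_0^t q(s)\,d s. \]
Since $1/g>0$, $G$ is strictly increasing, hence $G^{-1}$ exists and is strictly increasing and continuous on the range of $G$. Applying $G^{-1}$ yields $p(t)\le\phi(t)\le G^{-1}\bigl(G(K)+\int_0^t q(s)\,d s\bigr)$, valid exactly on the interval $[0,T_0]$ on which the argument lies in the domain of $G^{-1}$; this is the defining property of $T_0$. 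When $K=0$ in part (i), apply the bound just derived with $K$ replaced by an arbitrary $\varepsilon>0$ (using $p(t)\le\varepsilon+\int_0^t q(s)g(p(s))\,d s$) and pass to the limit $\varepsilon\downarrow 0$ via continuity of $G^{-1}$, interpreting $G(0)$ as $\lim_{\varepsilon\downarrow 0}G(\varepsilon)$.

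For part (ii), the divergence $\int_0^\varepsilon 1/g(s)\,d s=+\infty$ translates into $G(\varepsilon)\to-\infty$ as $\varepsilon\downarrow 0$ for any fixed choice of $x_0$ in the definition of $G$. The bound from part (i) with $K=\varepsilon$ reads $p(t)\le G^{-1}\bigl(G(\varepsilon)+\int_0^t q(s)\,d s\bigr)$, whose right-hand side tends to the infimum of the range of $G^{-1}$, namely $0$, as $\varepsilon\downarrow 0$. Combined with the positivity of $p$, this forces $p\equiv 0$. The only nontrivial bookkeeping in the whole argument is to keep $\phi$ strictly inside $(0,\infty)$, so that $g(\phi)$ and the change-of-variables formula for $1/g$ are meaningful; this is ensured by $K>0$ in part (i) and by the $\varepsilon$-regularization in the remaining cases, and is really the only delicate point.
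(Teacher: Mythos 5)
Your proof is correct: the absolutely continuous majorant $\phi(t)=K+\int_0^t q(s)g(p(s))\,ds$, the monotone comparison $g(p)\le g(\phi)$, the separation of variables turning $\int_0^t \phi'/g(\phi)$ into $G(\phi(t))-G(K)$, and the $\varepsilon$-regularization for the $K=0$ and part (ii) cases are all sound, and you correctly isolate the one delicate point (keeping $\phi$ inside $(0,\infty)$ so that $G$ and the chain rule apply, and staying inside the domain of $G^{-1}$, which is exactly the role of $T_0$). The paper offers no proof of this lemma --- it is quoted from the literature (\cite{B56,RZ}) --- and your argument is precisely the standard one from those sources, so there is nothing substantive to compare.
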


\begin{rem}\label{r2.0}
  It is obvious that the interval  $[G(K), \sup_{x\in(0,\infty)}G(x))$ is contained in the domain of $G^{-1}$, hence  (\ref{Bihari})
holds for $t\in[0, T_0]$,  where   $T_0$ satisfies
$$  \int_0^{T_0} q(s) ds <  \sup_{x\in(0,\infty)}G(x) -G(K).  $$
In particular, if $q\equiv 1$ and  $g(x)=C_0 x^\gamma$ for some constant $C_0>0$ and  $\gamma\ge 1$,  then
$$        G(x)=\frac{C_0}{\gamma-1} \left( x_0^{1-\gamma} - x^{1-\gamma}  \right);      ~
G^{-1}(x)= \left( x_0^{1-\gamma}  - \frac{\gamma-1}{C_0} x\right)^{ \frac{1}{1-\gamma} }.          $$
Hence (\ref{Bihari}) holds on  $[0, T_0]$ for  any $T_0\in [0, \frac{C_0}{\gamma -1}K^{1-\gamma})$ (in particular, for any  $T_0\in[0,\infty)$ if $\gamma=1$).

% (2)  If $g(x)=x^2$, then
% $$ G(x)=\frac{1}{x_0}-\frac{1}{x}, \ G^{-1}(x)=\frac{x_0}{1-x_0 x} \  (x<\frac{1}{x_0}) . $$

%  (3) If $g(x)=x^3$, then
% $$ G(x)=\frac{1}{2x_0^2}-\frac{1}{2x^2}, \  G^{-1}(x)=\frac{x_0}{\sqrt{1-2x_0^2 x}} \  (x<\frac{1}{2 x_0^2}) . $$
\end{rem}

Another difficulty  is due to the local monotonicity. It is well known that the hemicontinuity and
(global) monotonicity implies demicontinuity (cf. \cite{PR07,Z90}), which implies continuity in the finite dimensional case.
This is crucially used in the proof of existence of solutions for the finite dimensional equations of
the Galerkin approximation. In order to show the demicontinuity of locally monotone operators,
we need to use the techniques of
pseudo-monotone operators.
We first recall the definition of a  pseudo-monotone operator, which is  a very useful
generalization of a monotone operator and was first
introduced
 by Br\'{e}zis in \cite{Br68}.  We use the  notation ``$\rightharpoonup$''
for  weak convergence in Banach spaces.

\begin{defn} The operator $A: V\rightarrow V^*$ is called pseudo-monotone  if $v_n\rightharpoonup v$ in $V$ and
$$     \liminf_{n\rightarrow\infty} \<A(v_n), v_n-v\>_V\ge 0 $$
implies for all $u\in V$
$$   \<A(v), v-u\>_V \ge  \limsup_{n\rightarrow\infty} \<A(v_n), v_n-u\>_V.  $$
 \end{defn}

\begin{rem}\label{r2.1} Browder introduced
 a slightly different definition of  a pseudo-monotone operator in \cite{Bro77}:
% \begin{definition*}
An operator $A: V\rightarrow V^*$ is called pseudo-monotone  if
$v_n\rightharpoonup v$ in $V$ and
$$     \liminf_{n\rightarrow\infty} \<A(v_n), v_n-v\>_V\ge 0 $$
implies
$$   A(v_n)\rightharpoonup A(v)\ \    \text{and}   \  \
\lim_{n\rightarrow\infty} \<A(v_n), v_n\>_V=\<A(v), v\>_V.  $$
% \end{definition*}
In particular, under
assumption  $(H4)$,  these two
definitions are equivalent (cf. \cite{L11}).
\end{rem}

\begin{lem}\label{L2.1}
 If the  embedding $V\subseteq H$ is compact, then $(H1)$ and $(H2)$  imply that
$A(t,\cdot)$ is pseudo-monotone for any $t\in[0,T]$.
\end{lem}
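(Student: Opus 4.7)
The plan is to verify Br\'ezis's original pseudo-monotonicity criterion directly via a Minty-type argument, using $(H2)$ to control all differences in the $H$-norm (which is where the compact embedding pays off) and invoking $(H1)$ only at the very end to pass to a limit in an auxiliary direction. Fix $t\in[0,T]$ and abbreviate $A(v):=A(t,v)$. Suppose $v_n\rightharpoonup v$ in $V$ with $\liminf_n\langle A(v_n),v_n-v\rangle_V\ge 0$. Then $\{v_n\}$ is bounded in $V$ by Banach--Steinhaus, so compactness of $V\subseteq H$ gives $v_n\to v$ strongly in $H$, and local boundedness of $\rho$ on bounded subsets of $V$ yields $\sup_n\rho(v_n)<\infty$.

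\emph{Step 1 (liminf $\to$ limit).} Applying $(H2)$ with $v_1=v_n$, $v_2=v$ gives $\langle A(v_n),v_n-v\rangle_V\le\langle A(v),v_n-v\rangle_V+(f(t)+\rho(v_n)+\eta(v))\|v_n-v\|_H^2$; the first right-hand term tends to $0$ by $v_n\rightharpoonup v$ in $V$, the second because $v_n\to v$ in $H$ with a bounded prefactor. Combined with the hypothesis this forces $\lim_n\langle A(v_n),v_n-v\rangle_V=0$.

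\emph{Step 2 (Minty trick).} For arbitrary $w\in V$ and $\lambda\in(0,1]$ put $z_\lambda:=v-\lambda w$. Apply $(H2)$ with $v_1=v_n$, $v_2=z_\lambda$, decompose $v_n-z_\lambda=(v_n-v)+\lambda w$ on both sides, and rearrange to obtain $\lambda\langle A(v_n),w\rangle_V\le -\langle A(v_n),v_n-v\rangle_V+\langle A(z_\lambda),v_n-v\rangle_V+\lambda\langle A(z_\lambda),w\rangle_V+(f(t)+\rho(v_n)+\eta(z_\lambda))\|v_n-z_\lambda\|_H^2$. Taking $\limsup_{n\to\infty}$: Step 1 kills the first right-hand term, weak convergence of $v_n$ in $V$ kills the second, and $\|v_n-z_\lambda\|_H^2\to\lambda^2\|w\|_H^2$. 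Dividing by $\lambda>0$ leaves $\limsup_n\langle A(v_n),w\rangle_V\le\langle A(z_\lambda),w\rangle_V+C_w\lambda$, where $C_w$ depends only on $w$, $f(t)$, $\sup_n\rho(v_n)$ and $\sup_{\lambda\in(0,1]}\eta(z_\lambda)$ (finite by local boundedness of $\eta$, since $z_\lambda\to v$ in $V$). Letting $\lambda\downarrow 0$ and invoking $(H1)$ on $s\mapsto\langle A(v-sw),w\rangle_V$ yields $\limsup_n\langle A(v_n),w\rangle_V\le\langle A(v),w\rangle_V$ for every $w\in V$; replacing $w$ by $-w$ gives the matching liminf bound, so in fact $\langle A(v_n),w\rangle_V\to\langle A(v),w\rangle_V$ for all $w\in V$.

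\emph{Conclusion and main obstacle.} For any $u\in V$, write $\langle A(v_n),v_n-u\rangle_V=\langle A(v_n),v_n-v\rangle_V+\langle A(v_n),v-u\rangle_V$; Step 1 sends the first summand to $0$ and Step 2 sends the second to $\langle A(v),v-u\rangle_V$, so $\lim_n\langle A(v_n),v_n-u\rangle_V=\langle A(v),v-u\rangle_V$, which is even sharper than Br\'ezis's pseudo-monotonicity inequality. The only point requiring care is uniform control of the error coefficients $\rho(v_n)$ and $\eta(z_\lambda)$ as $n\to\infty$ and $\lambda\downarrow 0$; boundedness of $\{v_n\}$ in $V$, convergence $z_\lambda\to v$ in $V$, and local boundedness of $\rho,\eta$ together guarantee that the term $(f(t)+\rho(v_n)+\eta(z_\lambda))\|v_n-z_\lambda\|_H^2$ scales like $\lambda^2$ and thus drops out after dividing by $\lambda$. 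It is exactly at this closing step that the compact embedding $V\subseteq H$ is indispensable, since it is strong convergence in $H$ that produces the vanishing prefactor.
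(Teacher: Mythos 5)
Your proof is correct and takes essentially the same route as the paper's, which simply defers to \cite[Lemma 2.5]{L11}: compactness of $V\subseteq H$ upgrades weak $V$-convergence to strong $H$-convergence so that $(H2)$ controls all error terms, Step 1 converts the liminf hypothesis into $\lim_{n}\langle A(v_n),v_n-v\rangle_V=0$, and the Minty/hemicontinuity trick along $z_\lambda=v-\lambda w$ closes the argument. You in fact obtain the stronger Browder form $A(v_n)\rightharpoonup A(v)$ in $V^*$; the only reading required --- here exactly as in the cited source --- is that ``locally bounded'' for $\rho$ means bounded on bounded subsets of $V$, so that $\sup_n\rho(v_n)<\infty$ along the weakly convergent (hence norm-bounded) sequence.
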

\begin{proof}  For the proof we refer to \cite[Lemma 2.5]{L11}.
\end{proof}

The proof of  Theorem  \ref{T1} is split into a few lemmas.
We first consider the Galerkin approximation to (\ref{1.1}).

 Let $\{e_1,e_2,\cdots \}\subset V$ be an orthonormal basis in $H$ and
 let $H_n:=span\{e_1,\cdots,e_n\}$ such that $span\{e_1,e_2,\cdots\}$ is dense in $V$. Let $P_n:V^*\rightarrow H_n$ be defined by
$$ P_ny:=\sum_{i=1}^n \<y,e_i\>_V e_i, \ y\in V^*.  $$
Obviously, $P_n|_H$ is just the orthogonal projection onto $H_n$ in H and we have
$$ \<P_nA(t,u), v\>_V=\<P_nA(t,u),v\>_H=\<A(t,u),v\>_V, \ u\in V, v\in H_n.  $$

For each finite $n\in \mathbb{N}$ we consider the following evolution equation on $H_n$:
\begin{equation}\label{2.1}
 u_n'(t)=P_nA(t,u_n(t)),  \  0<t<T, \ u_n(0)=P_nu_0\in H_n.
\end{equation}
From now on, we fix $T_0$ as a positive constant satisfying
$$  0<T_0\le T \ \ \text{and }\ \ T_0<\sup_{x\in(0,\infty)} G(x)- G\left(\|u_0\|_H^2+\int_0^{T_0} f(s) d s \right) , $$
where the function $f$ and $G$ are as in $(H3)$ and Lemma \ref{Bihari inequality} respectively.

In particular, if $g(x)=C_0 x^\gamma (\gamma\ge 1)$, then one can take any $T_0\in(0, T]$ satisfying
$$T_0 < \frac{C_0}{\left(\gamma-1\right)\left(\|u_0\|_H^2 +\int_0^{T_0}f(s) d s \right)^{\gamma-1}  }.  $$

\begin{lem}
  Under the assumptions of Theorem \ref{T1},    (\ref{2.1}) has a  solution on $[0,T_0]$. Moreover,
the solution is unique on $[0,T_0]$  if additionally (\ref{c3})  holds.
\end{lem}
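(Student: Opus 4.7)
The plan is to treat (2.1) as a system of ordinary differential equations on the finite dimensional space $H_n$ and apply Carath\'eodory's existence theorem, then to upgrade local existence to existence on all of $[0,T_0]$ via an a priori estimate obtained from $(H3)$ and Bihari's lemma.

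First I would check that the right hand side $F_n(t,u):=P_n A(t,u)$ satisfies the hypotheses of Carath\'eodory's theorem on $H_n$. Measurability in $t$ for each fixed $u\in H_n$ follows from the restricted measurability of $A$. Local boundedness in $u$ on bounded subsets of $H_n$ follows from $(H4)$: on any ball $\{\|u\|_H\le R\}\cap H_n$ one has $\|u\|_V\le C_nR$ (all norms on $H_n$ are equivalent), and therefore $\|F_n(t,u)\|_H\le C(f(t)^{(\alpha-1)/\alpha}+1)$ for a constant depending on $n,R$, giving an $L^1$ majorant. The delicate point is continuity of $u\mapsto F_n(t,u)$, which is where the main obstacle lies: we only have hemicontinuity $(H1)$ and local monotonicity $(H2)$, not ordinary continuity of $A(t,\cdot):V\to V^*$. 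To overcome this I would invoke Lemma \ref{L2.1}: since $V\subseteq H$ is compact, $(H1)$ and $(H2)$ imply that $A(t,\cdot)$ is pseudo-monotone. Given $u_k\to u$ in $H_n$, we have $u_k\to u$ strongly in $V$, hence $\liminf_k\langle A(t,u_k),u_k-u\rangle_V=0$; combining pseudo-monotonicity with $(H4)$ and Remark \ref{r2.1} shows $A(t,u_k)\rightharpoonup A(t,u)$ in $V^*$, and applying $P_n$ yields convergence in the finite dimensional space $H_n$. Thus $F_n(t,\cdot)$ is continuous.

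Carath\'eodory's theorem then gives a solution $u_n$ of (\ref{2.1}) on some maximal interval $[0,T_n)$. To reach $T_0$ I would derive the standard a priori estimate: taking the $H$-inner product of (\ref{2.1}) with $u_n(t)$, applying $(H3)$ and integrating gives
\begin{equation*}
\|u_n(t)\|_H^2+\delta\int_0^t\|u_n(s)\|_V^\alpha\,ds\le \|u_0\|_H^2+\int_0^{T_0}f(s)\,ds+\int_0^t g\bigl(\|u_n(s)\|_H^2\bigr)\,ds,
\end{equation*}
for $t<T_n\wedge T_0$. Applying Bihari's inequality (Lemma \ref{Bihari inequality}) with $q\equiv 1$ and $K=\|u_0\|_H^2+\int_0^{T_0}f(s)\,ds$, the very choice of $T_0$ guarantees $G(K)+T_0$ lies in the domain of $G^{-1}$, so $\|u_n(t)\|_H$ stays bounded on $[0,T_n\wedge T_0)$. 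This rules out finite time blow-up and extends the solution to all of $[0,T_0]$.

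For uniqueness under (\ref{c3}), let $u_n^{(1)},u_n^{(2)}$ be two solutions; their difference satisfies, by $(H2)$,
\begin{equation*}
\frac{d}{dt}\|u_n^{(1)}(t)-u_n^{(2)}(t)\|_H^2\le 2\bigl(f(t)+\rho(u_n^{(1)}(t))+\eta(u_n^{(2)}(t))\bigr)\|u_n^{(1)}(t)-u_n^{(2)}(t)\|_H^2.
\end{equation*}
The a priori bound above places each $u_n^{(i)}$ in $L^\infty([0,T_0];H)\cap L^\alpha([0,T_0];V)$, and (\ref{c3}) then forces $\rho(u_n^{(1)})+\eta(u_n^{(2)})\in L^1([0,T_0])$. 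Since the two solutions share the initial value $P_nu_0$, Gronwall's lemma gives $u_n^{(1)}\equiv u_n^{(2)}$ on $[0,T_0]$.
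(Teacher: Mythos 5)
Your proposal is correct and follows essentially the same route as the paper: continuity of $P_nA(t,\cdot)$ on $H_n$ via pseudo-monotonicity (Lemma \ref{L2.1} and Remark \ref{r2.1}), verification of the Carath\'eodory conditions, the a priori bound from $(H3)$ and Bihari's inequality to reach $T_0$, and Gronwall with $(H2)$ and (\ref{c3}) for uniqueness. If anything, you are slightly more careful than the paper, which folds uniqueness into its citation of the Carath\'eodory theorem rather than spelling out the Gronwall argument as you do.
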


\begin{proof} For any $t\in[0,T]$,
it is easy to show that $A(t,\cdot)$ is demicontinuous by $(H1)$ and $(H2)$ (cf. \cite[Remark 4.1.1]{PR07} or \cite[Proposition 26.4]{Z90}),  i.e.
$$  u_n\rightarrow u  ~\text{strongly in} ~V  ~\text{as}~ n\rightarrow\infty  $$
implies that
$$   A(t, u_n)\rightarrow A(t,u) ~\text{weakly in }~ V^* ~ \text{as} ~ n\rightarrow\infty.     $$
In fact, one can first show that $A$ is locally bounded by using  similar arguments as in \cite{PR07}. This implies that
$\{A(t,u_n)\}$ is bounded in $V^*$. Hence  there exists a subsequence $(n_k)_{k\in\mathbb{N}}$ and $w\in V^*$ such that
$A(t,u_{n_k})\rightarrow w$  weakly in $V^*$ as $k\rightarrow\infty$.

Since $u_{n_k}\rightarrow u$ strongly in $V$ as $k\rightarrow\infty$, we have
$$        \lim_{k\rightarrow\infty} \<A(t,u_{n_k}), u_{n_k}\>_V =\<w, u  \>_V.   $$
By Lemma \ref{L2.1} we know that $A(t,\cdot)$ is a pseudo-monotone operator. Then by Remark \ref{r2.1}
we can conclude that  $A(u)=w$.  Since for all such subsequences their weak limit is $A(u)$, we have
$$   A(t, u_n)\rightarrow A(t,u) ~\text{weakly in }~ V^*~ \text{as} ~ n\rightarrow\infty.     $$

In particular, the demicontinuity implies that  $P_nA(t, \cdot): H_n\rightarrow H_n$ is continuous and hence the functions
$$ (t,u)\rightarrow   \<P_nA(t,u), e_j\>_V, j=1,2,\cdots,n, $$
satisfy the Carath\'{e}odory condition on $[0,T]\times H_n$, i.e.  for all  $j=1,2,\cdots,n$
$$  t\rightarrow   \<P_nA(t,u), e_j\>_V ~\text{is measurable on }  ~ [0,T]~\text{for all}~  u\in H_n;  $$
$$  u\rightarrow   \<P_nA(t,u), e_j\>_V ~\text{is continuous on  }  ~ H_n~\text{for almost  all}~  t\in [0,T]. $$

By $(H3)$ and Lemma \ref{Bihari inequality} we  get the following a priori estimate for (\ref{2.1}) (see Lemma \ref{l2.3}):

 There exist positive constants $T_0$ and $c$ such that if $u: I_0\rightarrow H_n$ is a solution of (\ref{2.1}) on an
arbitrary subinterval $I_0$ of $[0,T_0]$, then
$$    \|u(t)\|_H \le c  ~\text{for~ all}~   t\in I_0.     $$

Therefore, according to the classical existence theorem of Carath\'{e}odory for ordinary differential equations in $\mathbb{R}^n$
 (cf. \cite[pp. 799-800]{Z90}), there exists a unique solution $u_n$ to (\ref{2.1}) on $[0,T_0]$ such that
 $$u_n\in L^\alpha([0,T_0];H_n)\cap C([0,T];H_n), \      u_n'\in  L^{\frac{\alpha}{\alpha-1}}([0,T_0];H_n) .$$
\end{proof}

\begin{rem} From the proof it is clear that
the constant $T_0$  comes from the application of Bihari's inequality.  It only depends on
$u_0,g,f$ and is independent of $n$.
\end{rem}

For the  constant $T_0\in(0, T]$, let
$X:=L^\alpha([0,T_0];V)$, then
$X^*=L^{\frac{\alpha}{\alpha-1}}([0,T_0];V^*)$. We denote by
$W^1_\alpha(0,T_0;V, H)$ the Banach space
$$ W^1_\alpha(0,T_0;V, H)=\{ u\in X: u'\in X^*  \}, $$
where $u^\prime$ is the weak derivative of
$$t \mapsto u(t)\in V\subseteq H \subseteq V^*$$
and on $W^1_\alpha(0,T_0;V, H)$ the norm is defined by
$$ \|u\|_W:=\|u\|_X+\|u'\|_{X^*}= \left(\int_0^{T_0}\|u(t)\|_V^\alpha dt\right)^{\frac{1}{\alpha}}+
\left(\int_0^{T_0}\|u'(t)\|_{V^*}^{\frac{\alpha}{\alpha-1}} dt\right)^{\frac{\alpha-1}{\alpha}}. $$
 It's well known that $W^1_\alpha(0,T_0;V, H)$ is a reflexive
Banach space and it is continuously embedded into $C([0,T_0];H)$ (cf. \cite{Z90}). Moreover, we also have the following
integration by parts formula
\begin{equation*}
 \begin{split}
\<u(t), v(t)\>_H -\<u(0), v(0)\>_H=\int_0^t& \<u'(s), v(s)\>_V ds
+ \int_0^t \<v'(s), u(s)\>_V ds,\\
& \ t\in[0,T_0],  \ u,v\in W^1_\alpha(0,T_0; V,H).
 \end{split}
\end{equation*}

\begin{lem}\label{l2.3}
Under the assumptions of Theorem \ref{T1}, we have for any solution $u_n$ to $(\ref{2.1})$
\begin{equation}\label{e9}
\begin{split}
   \|u_n(t)\|_H^2+ \delta \int_0^t\|u_n(s)\|_V^\alpha d s
 \le
G^{-1} \left( G\left( \|u_0\|_H^2
 +\int_0^{T_0}  f(s)  d s \right)
+ t \right),  \ t\in[0, T_0],
\end{split}
\end{equation}
where $G(x):=\int_{x_0}^x \frac{1}{ g(r) } d r$ is well defined for some $x_0>0$.

In particular, there exists a  constant    $K>0$ such that
\begin{equation}
 \|u_n\|_X+\sup_{t\in[0,T_0]}\|u_n(t)\|_H+\|A(\cdot,u_n)\|_{X^*}\le K, \ n\ge 1.
\end{equation}
\end{lem}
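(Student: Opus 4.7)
The plan is: (i) apply the chain rule to $t\mapsto\|u_n(t)\|_H^2$ and use $(H3)$ to derive an integral inequality for this quantity plus the dissipation $\delta\int_0^\cdot\|u_n(s)\|_V^\alpha\,ds$; (ii) close the a priori estimate by Bihari's inequality (Lemma \ref{Bihari inequality}); (iii) deduce the uniform $X^*$-bound on $A(\cdot,u_n)$ from $(H4)$ and the just-obtained uniform $H$-bound.

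For step (i), since $u_n$ is $H_n$-valued and solves (\ref{2.1}) with $u_n'\in L^{\alpha/(\alpha-1)}([0,T_0];H_n)$, the finite-dimensional chain rule combined with the identity $\langle P_n A(s,u_n(s)), u_n(s)\rangle_H=\langle A(s,u_n(s)),u_n(s)\rangle_V$ (valid because $u_n(s)\in H_n$) yields
\begin{equation*}
\|u_n(t)\|_H^2=\|P_nu_0\|_H^2+2\int_0^t\langle A(s,u_n(s)),u_n(s)\rangle_V\,ds.
\end{equation*}
Inserting $(H3)$, using $\|P_nu_0\|_H\le\|u_0\|_H$ and writing $p(t):=\|u_n(t)\|_H^2+\delta\int_0^t\|u_n(s)\|_V^\alpha\,ds$, one obtains
\begin{equation*}
p(t)\le K+\int_0^t g(p(s))\,ds,\qquad K:=\|u_0\|_H^2+\int_0^{T_0}f(s)\,ds,
\end{equation*}
since $g$ is non-decreasing and $\|u_n(s)\|_H^2\le p(s)$. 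Step (ii) is then immediate: Bihari's inequality with $q\equiv 1$ gives $p(t)\le G^{-1}(G(K)+t)$ for every $t\in[0,T_0]$, which is exactly (\ref{e9}).

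For step (iii), the uniform bounds on $\sup_{t\in[0,T_0]}\|u_n(t)\|_H$ and $\|u_n\|_X$ are read off from (\ref{e9}); raising $(H4)$ to the power $\alpha/(\alpha-1)$ and absorbing the resulting uniform factor $(1+\|u_n(\cdot)\|_H^\beta)^{\alpha/(\alpha-1)}$ into a constant produces
\begin{equation*}
\int_0^{T_0}\|A(s,u_n(s))\|_{V^*}^{\alpha/(\alpha-1)}\,ds\le C\Bigl(\int_0^{T_0}f(s)\,ds+\|u_n\|_X^\alpha\Bigr),
\end{equation*}
which is uniformly bounded in $n$.

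The only delicate point is ensuring Bihari's inequality is usable on the whole interval $[0,T_0]$: this is exactly why $T_0$ was chosen so that $G(K)+T_0$ lies in the domain of $G^{-1}$. It is also the structural reason why only a local-in-time bound can be extracted from the general coercivity assumption, whereas the linear case $g(x)=Cx$ in part (ii) of Theorem \ref{T1} allows $T_0=T$.
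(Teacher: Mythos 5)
Your proof is correct and follows essentially the same route as the paper: the finite-dimensional energy identity combined with $(H3)$, then Bihari's inequality with $q\equiv 1$ applied to $p(t)=\|u_n(t)\|_H^2+\delta\int_0^t\|u_n(s)\|_V^\alpha\,ds$, and finally $(H4)$ for the $X^*$-bound. Your explicit remark that the monotonicity of $g$ is what allows replacing $g(\|u_n(s)\|_H^2)$ by $g(p(s))$ is a step the paper leaves implicit, but it is the same argument.
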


\begin{proof} By the integration by parts formula and $(H3)$ we have
\begin{equation}\begin{split}
& ~~~~\|u_n(t)\|_H^2-\|u_n(0)\|_H^2\\
&=2\int_0^t\<u_n'(s),u_n(s)\>_V d s\\
&=2\int_0^t\<P_nA(s,u_n(s)), u_n(s)\>_V d s\\
&=2\int_0^t\<A(s,u_n(s)), u_n(s)\>_V d s\\
&\le\int_0^t\left(-\delta\|u_n(s)\|_V^\alpha+ g\left( \|u_n(s)\|_H^2 \right) +f(s)  \right) d s.
\end{split}
\end{equation}
Hence we have for $t\in [0,T_0]$,
$$  \|u_n(t)\|_H^2+ \delta \int_0^t\|u_n(s)\|_V^\alpha d s\le \|u_0\|_H^2
 +\int_0^{T_0}   f(s)  d s
+ \int_0^t g\left( \|u_n(s)\|_H^2 \right) ds.  $$
Then by  Lemma \ref{Bihari inequality} and Remark \ref{r2.0}  we know that (\ref{e9}) holds.

Therefore, there exists a constant $C_2$ such that
$$ \|u_n\|_{X}+\sup_{t\in[0,T_0]}\|u_n(t)\|_H \le C_2, \ n\ge 1.  $$
Then by $(H4)$ there exists a constant $C_3$ such that
$$ \|A(\cdot, u_n)\|_{X^*} \le C_3, \ n\ge 1.   $$
Hence the proof is complete.
\end{proof}

Note that $X,X^*$ and $H$ are reflexive spaces. Then by  Lemma \ref{l2.3}  there exists a subsequence, again denoted
by $u_n$, such that as $n\rightarrow\infty$
\begin{equation*}
 \begin{split}
    u_n &\rightharpoonup u\  \ \text{in}\  X \ \text{and} \ \   W^1_\alpha(0,T_0;V,H); \\
 A(\cdot,u_n)&\rightharpoonup w \ \ \text{in} \ X^*; \\
 u_n(T_0) &\rightharpoonup z \ \ \text{in}  \  H.
 \end{split}
\end{equation*}

Recall that $u_n(0)=P_nu_0\rightarrow u_0$ in $H$ as $n\rightarrow\infty$.
\begin{lem}
Under the assumptions of Theorem \ref{T1},
 the limit elements $u,w$ and $z$ satisfy  $u\in W^1_\alpha(0,T_0;V,H)$ and
$$ u'(t)=w(t),  \ 0<t<T_0, \ u(0)=u_0, \ u(T_0)=z.$$
\end{lem}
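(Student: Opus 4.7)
The plan is to pass to the limit in the Galerkin equation in a weak sense, against test functions that are tensor products of a smooth scalar function and a finite-dimensional basis vector. Fix $k \in \mathbb{N}$, $v \in H_k$ and $\varphi \in C_0^\infty(0,T_0)$. For every $n \geq k$, the Galerkin equation $(\ref{2.1})$ together with the fact that $P_n$ is the orthogonal projection onto $H_n$ gives
\[
-\int_0^{T_0} \<u_n(t), v\>_H\, \varphi'(t)\, dt \;=\; \int_0^{T_0} \<A(t,u_n(t)), v\>_V\, \varphi(t)\, dt.
\]
The weak convergences $u_n \rightharpoonup u$ in $X$ and $A(\cdot,u_n) \rightharpoonup w$ in $X^*$ allow passage to the limit on both sides (the left-hand side needs only weak convergence in $L^\alpha([0,T_0];H)$, which is implied by the weak convergence in $X$ since $V \subseteq H$). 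This yields
\[
-\int_0^{T_0} \<u(t), v\>_H\, \varphi'(t)\, dt \;=\; \int_0^{T_0} \<w(t), v\>_V\, \varphi(t)\, dt.
\]

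Since $\bigcup_k H_k$ is dense in $V$ and $\varphi \in C_0^\infty(0,T_0)$ is arbitrary, the identity above identifies $w$ as the distributional derivative $u'$, so $u' = w$ in $X^*$ and hence $u \in W^1_\alpha(0,T_0;V,H)$. In particular, $u$ admits a continuous $H$-valued representative on $[0,T_0]$, so the traces $u(0)$ and $u(T_0)$ are well-defined.

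To identify the initial value, I would repeat the procedure with a test function $\varphi \in C^1([0,T_0])$ satisfying $\varphi(0)=1$ and $\varphi(T_0)=0$. Integration by parts for the Galerkin solution gives
\[
\<u_n(0), v\>_H\, \varphi(0) \;=\; -\int_0^{T_0} \<u_n(t), v\>_H\, \varphi'(t)\, dt - \int_0^{T_0} \<A(t,u_n(t)),v\>_V\, \varphi(t)\, dt,
\]
and since $u_n(0) = P_n u_0 \to u_0$ in $H$, passage to the limit combined with the integration by parts formula in $W^1_\alpha(0,T_0;V,H)$ (applied to $u$ with $u'=w$) yields $\<u(0),v\>_H = \<u_0,v\>_H$ for every $v \in \bigcup_k H_k$, hence $u(0)=u_0$. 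The identification $u(T_0)=z$ is entirely analogous, using $\varphi \in C^1([0,T_0])$ with $\varphi(0)=0$, $\varphi(T_0)=1$ and the weak convergence $u_n(T_0) \rightharpoonup z$ in $H$.

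There is no real obstacle here: the argument is a standard distributional identification and the key facts (weak convergence of $u_n$ and of $A(\cdot,u_n)$, strong convergence $P_n u_0 \to u_0$, density of $\bigcup_k H_k$ in $V$, and the integration by parts formula in $W^1_\alpha(0,T_0;V,H)$) are all already in place. The genuinely delicate step of the whole proof scheme, namely showing that $w(t) = A(t,u(t))$, is deferred to a later lemma where the local monotonicity $(H2)$ and the pseudo-monotonicity provided by Lemma \ref{L2.1} must be invoked via a Minty-type argument; that is not required at this stage.
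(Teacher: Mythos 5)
Your argument is correct and is essentially the standard distributional identification that the paper invokes by citing \cite[Lemma 2.3]{L11} (itself the classical argument from the monotone-operator literature, e.g.\ Zeidler): test the Galerkin equation against $\varphi(t)v$ with $v\in H_k$, pass to the limit using the weak convergences $u_n\rightharpoonup u$ in $X$, $A(\cdot,u_n)\rightharpoonup w$ in $X^*$, $P_nu_0\to u_0$ and $u_n(T_0)\rightharpoonup z$, then use density of $\bigcup_k H_k$ in $V$ and the integration by parts formula in $W^1_\alpha(0,T_0;V,H)$ to identify $u'=w$ and the traces. No gaps; the deferral of $w=A(\cdot,u)$ to the Minty-type lemma is exactly how the paper proceeds.
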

\begin{proof} See \cite[Lemma 2.3]{L11}.
\end{proof}

The next crucial step in the proof of Theorem \ref{T1} is to verify $w=A(u)$.
In the case of monotone operators, this is the well known Minty's lemma (or monotonicity trick) (cf. \cite{Mi62,Mi63,Bro63,Bro64}).
In the case of locally monotone operators, we use the following  integrated version of Minty's lemma
which holds due to pseudo-monotonicity.
The following lemma has  first been proved  in \cite[Lemma 2.6]{L11}.  We include the proof
 here for the reader's convenience.

% Next lemma is very crucial for the proof of Theorem \ref{T1}. The result
% basically says that $A$ is also a pseudo-monotone operator from $X$ to
% $X^*$, hence one can still use a modified monotonicity tricks to verify the limit of the
% Galerkin approximation as a solution to (\ref{1.1}).

\begin{lem}\label{L2.5}
Under the assumptions of Theorem \ref{T1}, and
supposing that
\begin{equation}\label{2.3}
 \liminf_{n\rightarrow \infty} \int_0^{T_0}\<A(t,u_n(t)), u_n(t)\>_V d t \ge
\int_0^{T_0} \<w(t), u(t)\>_V d t,
\end{equation}
 we have for any $v\in X$
\begin{equation}
\int_0^{T_0}\<A(t,u(t)), u(t)-v(t)\>_V d t \ge
 \limsup_{n\rightarrow \infty}\int_0^{T_0} \<A(t,u_n(t)), u_n(t)-v(t)\>_V d t.
\end{equation}
In particular, we have $A(t,u(t))=w(t), ~a.e.\  t\in[0,T_0]$.
\end{lem}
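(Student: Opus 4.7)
The plan is a two-stage argument. First, I would establish $\lim_{n\to\infty}\int_0^{T_0}\<A(t,u_n(t)), u_n(t)-u(t)\>_V\,dt=0$; then I would use this to identify $w=A(u)$ almost everywhere via a hemicontinuity trick, from which the $\limsup$ inequality (in fact, an equality) follows by a simple decomposition.

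For the first stage, I would apply the Aubin--Lions compactness lemma (using that $u_n$ is bounded in $X = L^\alpha([0,T_0];V)$, $u_n'$ is bounded in $X^*$, and $V \hookrightarrow H$ compactly) to extract a subsequence with $u_n \to u$ strongly in $L^\alpha([0,T_0];H)$, hence $u_n(t) \to u(t)$ in $H$ for a.e.\ $t$. Applying $(H2)$ pointwise then gives
$$\<A(t,u_n(t)), u_n(t)-u(t)\>_V \le \<A(t,u(t)), u_n(t)-u(t)\>_V + \bigl(f(t)+\rho(u_n(t))+\eta(u(t))\bigr)\|u_n(t)-u(t)\|_H^2.$$
Integrating, the first term on the right vanishes as $n\to\infty$ because $u_n \rightharpoonup u$ in $X$ and $A(\cdot,u(\cdot))\in X^*$ by $(H4)$, while the second should also tend to zero. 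Combining with the hypothesis, rewritten as $\liminf_n \int_0^{T_0}\<A(u_n),u_n-u\>_V\,dt \ge 0$ via the weak $X^*$-convergence $A(u_n) \rightharpoonup w$ tested against $u \in X$, this yields $\lim_n \int_0^{T_0}\<A(u_n),u_n-u\>_V\,dt = 0$.

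For the second stage, applying $(H2)$ to $A(u_n)$ and $A(u-\lambda v)$ for arbitrary $v\in X$ and $\lambda\in(0,1]$, integrating, and passing to the limit in $n$ using the first stage together with the weak convergences, I would obtain an inequality that, after dividing by $\lambda$ and letting $\lambda\to 0^+$ with hemicontinuity $(H1)$, yields $\int_0^{T_0}\<w,v\>_V\,dt \le \int_0^{T_0}\<A(u),v\>_V\,dt$ for every $v\in X$. Replacing $v$ by $-v$ forces equality, so $w=A(u)$ almost everywhere. The desired $\limsup$-inequality then follows from the decomposition $\<A(u_n), u_n-v\>_V = \<A(u_n), u_n-u\>_V + \<A(u_n), u-v\>_V$ and the convergences already established.

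The main obstacle is controlling $\int_0^{T_0}\rho(u_n(t))\|u_n(t)-u(t)\|_H^2\,dt$ in the first stage: since $\rho$ is only locally bounded on $V$ and $u_n(t)$ need not be uniformly bounded in $V$ pointwise in $t$, dominated convergence does not apply directly. I would overcome this by splitting the integral over $\{t:\|u_n(t)\|_V + \|u(t)\|_V \le R\}$, where local boundedness of $\rho$ combined with $L^2(H)$-strong convergence yields vanishing, and its complement, whose measure is uniformly small in $n$ by Chebyshev's inequality and the $L^\alpha([0,T_0];V)$-bounds from Lemma~\ref{l2.3}; one then lets $R\to\infty$.
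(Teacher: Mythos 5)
Your overall architecture (first show $\int_0^{T_0}\<A(t,u_n(t)),u_n(t)-u(t)\>_V\,dt\to 0$, then identify $w=A(u)$) parallels the paper's, but both of your stages hinge on integrating the local monotonicity inequality $(H2)$ along the Galerkin sequence, and this is exactly where the argument breaks down. The terms $\int_0^{T_0}\rho(u_n(t))\|u_n(t)-u(t)\|_H^2\,dt$ in Stage 1, and $\int_0^{T_0}\left(\rho(u_n(t))+\eta(u(t)-\lambda v(t))\right)\|u_n(t)-u(t)+\lambda v(t)\|_H^2\,dt$ in Stage 2, cannot be controlled under the hypotheses of Lemma \ref{L2.5}: the functions $\rho,\eta$ are only assumed measurable and locally bounded on $V$ --- the growth bound (\ref{c3}) is invoked only for uniqueness, not here --- so on the exceptional set $\{t:\|u_n(t)\|_V>R\}$ the integrand admits no bound whatsoever. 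Chebyshev's inequality controls the \emph{measure} of that set, but a small domain does not make an integral small when the integrand is unquantified; even granting (\ref{c3}), $\rho(u_n(\cdot))$ is then only bounded in $L^1([0,T_0])$ uniformly in $n$, not uniformly integrable, so pairing it with $\|u_n-u\|_H^2$ (which tends to $0$ in $L^1$ but not in $L^\infty$) still does not close the estimate. The same objection defeats the Minty/hemicontinuity step: the leftover error after dividing by $\lambda$ involves $\int(f+\rho(u_n)+\eta(u-\lambda v))\|v\|_H^2\,dt$, for which you have no a priori finiteness, let alone a bound uniform in $n$.

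The paper circumvents this entirely by never integrating $(H2)$ along the sequence. It works pointwise in $t$: Lemma \ref{L2.1} upgrades $(H1)$--$(H2)$ plus compactness of $V\subseteq H$ to pseudo-monotonicity of $A(t,\cdot)$; at any fixed $t_0$ where $\limsup_n\<A(t_0,u_n(t_0)),u_n(t_0)-u(t_0)\>_V>0$, conditions $(H3)$--$(H4)$ together with $\sup_n\sup_t\|u_n(t)\|_H<\infty$ (Lemma \ref{l2.3}) force $V$-boundedness of the relevant subsequence, hence weak $V$-convergence to $u(t_0)$, and pseudo-monotonicity yields a contradiction. Reverse Fatou --- with the integrable majorant again supplied by $(H3)$--$(H4)$ rather than by $(H2)$ --- then gives the integrated limit, and a second pointwise application of pseudo-monotonicity along a subsequence for which $\<A(t,u_n(t)),u_n(t)-u(t)\>_V\to 0$ a.e.\ finishes the identification. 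To salvage your strategy you would have to either impose integrable growth on $\rho,\eta$ as an additional hypothesis or route the argument through pseudo-monotonicity as the paper does.
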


\begin{proof}
Since $W^1_\alpha(0,T_0;V,H)\subset C([0,T_0];H)$ is a continuous embedding, we have that
 $u_n(t)$ converges to $u(t)$ weakly in $H$
for all $t\in[0,T_0]$.
%Then we know that  $u_n(t)$ also converges to $u(t)$ weakly in $V$
% for  all $t\in[0,T_0]$.

\textbf{Claim 1:}  For all  $t\in[0,T_0]$ we have
\begin{equation}\label{claim 1}
  \limsup_{n\rightarrow\infty}\<A(t,u_n(t)),u_n(t)-u(t)\>_V \le 0 .
\end{equation}

Suppose there exists a $t_0\in[0,T_0]$ such that
$$   \limsup_{n\rightarrow\infty}\<A(t_0,u_n(t_0)),u_n(t_0)-u(t_0)\>_V > 0.   $$
Then we can take a subsequence such that
$$   \lim_{i\rightarrow\infty}\<A(t_0,u_{n_i}(t_0)),u_{n_i}(t_0)-u(t_0)\>_V > 0.   $$

By $(H3)$ and $(H4)$ there exists a constant $K$ such that
\begin{equation*}
 \begin{split}
  2 \<A(t_0,u_{n_i}(t_0)),u_{n_i}(t_0)-u(t_0)\>_V  \le & -\frac{\delta}{2}\|u_{n_i}(t_0)\|_V^\alpha +K\left( f(t)
+ g\left( \|u_{n_i}(t_0)\|_H^2 \right)  \right)\\
& +K\left(1+\|u_{n_i}(t_0)\|_H^{\alpha \beta} \right)\|u(t_0)\|_V^\alpha.
 \end{split}
\end{equation*}
Hence we know that $\{u_{n_i}(t_0)\}$ is bounded in $V$ (w.r.t. $\|\cdot\|_V$), so there
exists a subsequence of $\{u_{n_i}(t_0)\}$  converges to some limit weakly in $V$.

Note that $u_{n_i}(t_0)$ converges to $u(t_0)$ weakly in $H$,
it is easy to show that  $u_{n_i}(t_0)$  converges to $u(t_0)$ weakly in $V$.

  Since $A(t_0,\cdot)$ is pseudo-monotone, we have
$$ \<A(t_0,u(t_0)), u(t_0)-v\>_V \ge \limsup_{i\rightarrow\infty}\<A(t_0,u_{n_i}(t_0)),u_{n_i}(t_0)-v\>_V,
\ v\in V. $$
In particular, we have
 $$ \limsup_{i\rightarrow\infty}\<A(t_0,u_{n_i}(t_0)),u_{n_i}(t_0)-u(t_0)\>_V\le 0, $$
which is a contradiction to the definition of the subsequence $\{ u_{n_i}(t_0)\}$.

Hence (\ref{claim 1}) holds.

Similarly, by $(H3)$ and $(H4)$ there exists a constant $K$ such that
\begin{equation*}
 \begin{split}
  2 \<A(t,u_n(t)),u_n(t)-v(t)\>_V\le & -\frac{\delta}{2}\|u_n(t)\|_V^\alpha +K\left( f(t)
+ g\left( \|u_n(t)\|_H^2 \right)  \right)\\
& +K\left(1+\|u_n(t)\|_H^{\alpha \beta} \right)\|v(t)\|_V^\alpha,\
v\in X.
 \end{split}
\end{equation*}

Then by Lemma \ref{l2.3}, Fatou's lemma, (\ref{2.3}) and (\ref{claim 1}) we have
\begin{equation}
\begin{split}
0&\le \liminf_{n\rightarrow \infty}\int_0^{T_0} \<A(t,u_n(t)), u_n(t)-u(t)\>_V d t \\
&\le \limsup_{n\rightarrow \infty}\int_0^{T_0} \<A(t,u_n(t)), u_n(t)-u(t)\>_V d t \\
&\le \int_0^{T_0} \limsup_{n\rightarrow \infty} \<A(t,u_n(t)), u_n(t)-u(t)\>_V d t \le 0.
\end{split}
\end{equation}

Hence
$$   \lim_{n\rightarrow \infty}\int_0^{T_0} \<A(t,u_n(t)), u_n(t)-u(t)\>_V d t = 0.   $$

\textbf{Claim 2:} There exists a subsequence $\{u_{n_i}\}$ such that
\begin{equation}\label{claim 2}
 \lim_{i\rightarrow \infty} \<A(t,u_{n_i}(t)), u_{n_i}(t)-u(t)\>_V  = 0 \  \ \text{for}\ a.e. \ t\in[0,T_0].
\end{equation}
Define $ g_n(t):= \<A(t,u_{n}(t)), u_{n}(t)-u(t)\>_V, \ t\in[0,T] $.  Then
$$     \lim_{n\rightarrow \infty}\int_0^{T_0} g_n(t) d t = 0, \ \ \ \limsup_{n\rightarrow\infty} g_n(t)\le 0,\   \  \  t\in[0,T_0].       $$
Then by Lebesgue's dominated convergence theorem we have
$$ \lim_{n\rightarrow \infty}\int_0^{T_0} g_n^+(t) d t = 0,  $$
where $g_n^+(t):=max\{g_n(t), 0 \}$.

Note that $|g_n(t)|=2g_n^+(t)-g_n(t)$, hence we have
$$  \lim_{n\rightarrow \infty}\int_0^{T_0} |g_n(t)| d t = 0.     $$
Therefore, we can take a subsequence $\{g_{n_i}(t)\}$ such that
$$     \lim_{i\rightarrow \infty} g_{n_i}(t) = 0 \ \ \text{for}\ a.e. \ t\in[0,T_0],  $$
$i.e.$  (\ref{claim 2}) holds.

Therefore, for any $v\in X$, we can choose a subsequence $\{u_{n_i}\}$ such that
$$   \lim_{i\rightarrow \infty}\int_0^{T_0} \<A(t,u_{n_i}(t)), u_{n_i}(t)-v(t)\>_V d t =
\limsup_{n\rightarrow \infty}\int_0^{T_0} \<A(t,u_n(t)), u_n(t)-v(t)\>_V d t;         $$
$$    \lim_{i\rightarrow \infty} \<A(t,u_{n_i}(t)), u_{n_i}(t)-u(t)\>_V  = 0 \ \ \text{for}\ a.e. \ t\in[0,T_0].       $$
Since $A$ is pseudo-monotone, we have
$$  \<A(t,u(t)), u(t)-v(t)\>_V \ge  \limsup_{i\rightarrow \infty} \<A(t,u_{n_i}(t)), u_{n_i}(t)-v(t)\>_V,\  \ t\in[0,T_0].       $$
By Fatou's lemma we obtain
\begin{equation}
 \begin{split}
  \int_0^{T_0}\<A(t,u(t)), u(t)-v(t)\>_V d t &\ge
 \int_0^{T_0}\limsup_{i\rightarrow \infty} \<A(t,u_{n_i}(t)), u_{n_i}(t)-v(t)\>_V d t\\
&\ge  \limsup_{i\rightarrow \infty}\int_0^{T_0} \<A(t,u_{n_i}(t)), u_{n_i}(t)-v(t)\>_V d t\\
&= \limsup_{n\rightarrow \infty}\int_0^{T_0} \<A(t,u_n(t)), u_n(t)-v(t)\>_V d t.
\end{split}
\end{equation}

In particular,  we have for any $v\in X$,
\begin{equation*}
 \begin{split}
  \int_0^{T_0}\<A(t,u(t)), u(t)-v(t)\>_V d t
&\ge\limsup_{n\rightarrow \infty}\int_0^{T_0} \<A(t,u_n(t)), u_n(t)-v(t)\>_V d t\\
&\ge\liminf_{n\rightarrow \infty}\int_0^{T_0} \<A(t,u_n(t)), u_n(t)-v(t)\>_V d t\\
&\ge \int_0^{T_0} \<w(t), u(t)\>_V d t- \int_0^{T_0} \<w(t), v(t)\>_V d t\\
&=\int_0^{T_0} \<w(t), u(t)-v(t)\>_V d t.
\end{split}
\end{equation*}
Since $v\in X$ is arbitrary, we have $A(\cdot,u)=w$ as  elements in $X^*$.

Hence the proof is complete.
\end{proof}

Now we can give the complete proof of Theorem \ref{T1}.

\noindent\textbf{Proof of Theorem \ref{T1}}
(i) Existence:
The integration by parts formula implies that
$$ \|u_n(T_0)\|_H^2-\|u_n(0)\|_H^2=2 \int_0^{T_0} \<A(t,u_n(t)),  u_n(t)\>_V d t;   $$
$$ \|u(T_0)\|_H^2-\|u(0)\|_H^2=2 \int_0^{T_0} \<w(t), u(t)\>_V d t.   $$
Since $u_n(T_0)\rightharpoonup z$ in $H$, by the lower semicontinuity of $\|\cdot\|_H$ we have
$$  \liminf_{n\rightarrow \infty} \|u_n(T_0)\|_H^2\ge \|z\|_H^2=\|u(T_0)\|_H^2.   $$
Hence we have
\begin{equation*}
 \begin{split}
&  \liminf_{n\rightarrow \infty} \int_0^{T_0}\<A(t,u_n(t)), u_n(t)\>_V d t \\
\ge & \frac{1}{2}
\left( \|u(T_0)\|_H^2 -\|u(0)\|_H^2 \right)\\
=& \int_0^{T_0} \<w(t), u(t)\>_V d t.
\end{split}
\end{equation*}
By Lemma \ref{L2.5}  we know that
 $u$ is a solution to (\ref{1.1}).

(ii) Uniqueness:  Suppose $u(\cdot,u_0),v(\cdot,v_0)$ are the
solutions to (\ref{1.1}) with starting points $u_0,v_0$
respectively, then by the integration by parts formula   we have for
$t\in[0, T_0]$,
\begin{equation*}
 \begin{split}
  \|u(t)-v(t)\|_H^2&=\|u_0-v_0\|_H^2+ 2\int_0^t\< A(s,u(s))-A(s,v(s)),u(s)-v(s)\>_V ds\\
 &\le \|u_0-v_0\|_H^2+ 2\int_0^t \left( f(s)+\rho(u(s))+ \eta(v(s))    \right)   \|u(s)-v(s)\|_H^2 ds.
 \end{split}
\end{equation*}
By (\ref{c3}) we know that
$$       \int_0^{T_0} \left( f(s)+\rho(u(s))+ \eta(v(s))  \right)  d s <\infty.                $$
Then by Gronwall's lemma we obtain
\begin{equation}\label{estimate of difference}
    \|u(t)-v(t)\|_H^2\le\|u_0-v_0\|_H^2 \exp\left[2\int_0^t \left( f(s)+\rho(u(s))+ \eta(v(s))    \right) d s  \right], \ t\in[0, T_0].
\end{equation}
In particular, if $u_0=v_0$, this  implies the  uniqueness of the
solution to $(\ref{1.1})$. \qed

%%%%%%%%%%%%%%%%%%%%%%%%%%%%%%%%%%%%%%%%%%%%%%%%%%%%%%%%%%%%%%%%%%
%%%%%%%%%%%%%%%%%%%%%%%%%%%%%%%%%%%%%%%%%%%%%%%%%%%%%%%%%%%%%%%%%%%%%%%%%%%%

\subsection{Proof of Theorem \ref{T2}} By $(H2)$ we have for $ t\in[0, T_0]$,
\begin{equation*}
 \begin{split}
&  \|u_1(t)-u_2(t)\|_H^2\\
=& \|u_{1,0}-u_{2,0}\|_H^2+ 2\int_0^t\< A(s,u_1(s))-A(s,u_2(s)),u_1(s)-u_2(s)\>_V ds\\
 \le&  \|u_{1,0}-u_{2,0}\|_H^2
    +\int_0^t \left( f(s)+\rho(u_1(s))+ \eta(u_2(s))   \right) \|u_1(s)-u_2(s)\|_H^2 ds,
 \end{split}
\end{equation*}
where $C$ is a constant.

Then by Gronwall's lemma we have
\begin{equation*}
 \begin{split}
      \|u_1(t)-u_2(t)\|_H^2
\le &  \exp\left[\int_0^t \left( f(s) +\rho(u_1(s))+ \eta(u_2(s))\right) d s  \right]\\
& \cdot  \left( \|u_{1,0}-u_{2,0}\|_H^2 +\int_0^t
\|b_1(s)-b_2(s)\|_H^2 ds \right), \ t\in[0, T_0].
 \end{split}
\end{equation*}
\qed

\subsection{Proof of Theorem \ref{T1.1}}
 We first  consider  the  process $Y$ which solves  the following SPDE:
 \begin{equation*}
  d Y(t)= A_1(t, Y(t)) dt  + B(t) d W(t), \ 0< t< T, \  Y(0)=0.
  \end{equation*}
By  \cite[Theorem 1.1]{LR10}  we know that there exists a unique solution $Y$ to the above equation and it  satisfies
$$  Y(\cdot)\in L^\alpha([0,T]; V)\cap C([0,T]; H); \  \mathbb{P}\text{-a.s.}.   $$
Let $u(t)=X(t)-Y(t)$.  Then it is easy to see that
$u(t)$ satisfies the following equation:
\begin{equation}\label{rde}
   u^\prime(t)= \tilde{A}\left(t, u(t) \right),\  0<t<T,
 \ u(0)=u_0,
\end{equation}
where
(for fixed $\omega$  which we omit  in the notation for simplicity)
  $$\tilde{A}(t,v) := A_1(t,v+Y(t))-A_1(t,Y(t))+A_2(t,v+Y(t)), \  v\in V.$$
It is easy to show that $\tilde{A}$ is a well defined operator from $[0,T]\times V$ to $V^*$ since  $Y(\cdot)\in L^\alpha([0,T]; V)$.

To obtain the  existence and uniqueness of  solutions to \eqref{rde}  we only need to show that $\tilde{A}$ satisfies all
 the assumptions of Theorem \ref{T1}.

Since $Y(t)$ is measurable,
$\tilde{A}(t,v)$ is $\mathcal{B}([0,T])\otimes \mathcal{B}(V)$-measurable.
It is also easy to show that
 $\tilde{A}$ is hemicontinuous
since  $(H1)$ holds for both $A_1$  and $A_2$.

For $u,v\in V$ we have
\begin{equation*}\begin{split}
& \<\tilde{A}(t,u)- \tilde{A}(t,v), u- v\>_V \\
=&  \<A_1(t,u+Y(t)) - A_1(t,v+Y(t)), u- v\>_V \\
& +  \<A_2(t,u+Y(t)) - A_2(t,v+Y(t)), u- v\>_V \\
\le & \left( f(t)+\eta(v+Y(t)) \right)\|u-v\|_H^2  \\
& + \left( f(t)+ \rho(v+Y(t)) +\eta(v+Y(t)) \right)\|u-v\|_H^2 \\
\le & C \left[ f(t)+ \rho(Y(t)) +\eta(Y(t)) + \rho(v) +\eta(v) \right] \|u-v\|_H^2,
\end{split}
\end{equation*}
i.e. $(H2)$ holds for $\tilde{A}$ with
$$  \tilde{f}(t)=C  \left[ f(t)+ \rho(Y(t)) +\eta(Y(t)) \right]\in L^1([0,T]).       $$
Since $A_2$ satisfies  $(H3)$ and $(H4)$,   by  Young's inequality we have
\begin{equation*}\begin{split}\label{coercivity}
&2 \<A_2(t,v+Y(t)), v\>_V=2 \<A_2(t,v+Y(t)), v+Y(t)-Y(t)\>_V\\
\le & -\delta \|v+Y(t) \|_V^\alpha + g\left( \|v+Y(t)  \|_H^2\right) +f(t)- 2 \<A_2(t,v+Y(t)), Y(t)\>_V\\
\le & -\delta \|v+Y(t) \|_V^\alpha +g\left( \|v+Y(t)  \|_H^2\right) +f(t) \\
& +C \left(f(t)^{\frac{\alpha-1}{\alpha}}+\|v+ Y(t)\|_V^{\alpha-1}\right)
\left( 1+\|v+Y(t)\|_H^\beta  \right)   \|Y(t)\|_V\\
\le & -\frac{\delta}{2} \|v+ Y(t)  \|_V^\alpha +g\left( \|v+Y(t)  \|_H^2\right)+ (1+\frac{\delta}{2}) f(t)\\
& + C \|Y(t)\|_V^\alpha \left(1+\|v+Y(t)\|_H^{\alpha\beta}\right)  \\
\le & -\frac{\delta}{2} \left(2^{1-\alpha}\|v\|_V^\alpha -  \| Y(t)  \|_V^\alpha \right)+  g\left( 2\|v \|_H^2+ 2  \|Y(t) \|_H^2\right)   \\
&   + (1+\frac{\delta}{2}) f(t) + C \|Y(t)\|_V^{\alpha} \left(1+\|v\|_H^{\alpha\beta} +  \|Y(t)\|_H^{\alpha\beta} \right)   \\
\le & -2^{-\alpha}\delta\|v\|_V^\alpha  +  g\left( 2\|v \|_H^2+ 2  \|Y(t) \|_H^2\right)  +
C \|Y(t)\|_V^{\alpha} \|v\|_H^{\alpha\beta}\\
 & + (1+\frac{\delta}{2}) f(t)
 + C \|Y(t)\|_V^{\alpha} \left(1+  \|Y(t)\|_H^{\alpha\beta} \right),  \ v\in V,
\end{split}
\end{equation*}
where $C$ is some constant changing from line to line (but independent of $t$ and $\omega$).

Similarly, we have
\begin{equation*}\begin{split}
&2 \<A_1(t,v+Y(t))-A_1(t,Y(t)), v\>_V \\
=&2 \<A_1(t,v+Y(t)), v+Y(t)-Y(t)\>_V-2  \<A_1(t,Y(t)), v\>_V \\
\le & -\delta \|v+Y(t) \|_V^\alpha + C \|v+Y(t)  \|_H^2 +f(t) \\
  &+ \|Y(t)\|_V\left( f(t)^{\frac{\alpha-1}{\alpha}}+ C\|v+Y(t)\|_V^{\alpha-1}    \right)+\|v\|_V\|A_1(t,Y(t))\|_{V^*}    \\
\le & -\frac{\delta}{2} \|v+ Y(t)  \|_V^\alpha +C \|v+Y(t)  \|_H^2 + (1+\frac{\delta}{2})f(t)  \\
& +C\|Y(t)\|_V^\alpha+\|v\|_V\|A_1(t,Y(t))\|_{V^*}    \\
\le & -\frac{\delta}{2} \left(2^{1-\alpha}\|v\|_V^\alpha -  \| Y(t)  \|_V^\alpha \right)+  C\left(\|v \|_H^2+  \|Y(t) \|_H^2\right)   \\
& + C(f(t)+\|Y(t)\|_V^\alpha)+\|v\|_V\left(    f(t)^{\frac{\alpha-1}{\alpha}}+ C\|Y(t)\|_V^{\alpha-1}   \right)    \\
\le & -2^{-\alpha-1}\delta\|v\|_V^\alpha  +  C\|v \|_H^2  + C \left(f(t)+ \|Y(t)\|_V^{\alpha} +  \|Y(t)\|_H^{2}  \right), \ v\in V.
\end{split}
\end{equation*}
Since $Y(\cdot)\in L^\alpha([0,T]; V)\cap C([0,T]; H)$, we know that $\tilde{A}$ satisfies $(H3)$ with
$$  \tilde{f}(t)=  C \left(f(t)+ \|Y(t)\|_V^{\alpha} +  \|Y(t)\|_H^{2} +\|Y(t)\|_V^\alpha \|Y(t)\|_H^{\alpha\beta}  \right).     $$
 The growth condition $(H4)$  also holds for $\tilde{A}$ since
\begin{align*}
  \|\tilde{A}(t, v)\|_{V^*}
  =& \|A_1(t,v+Y(t))\|_{V^*} + \|A_1(t,Y(t))\|_{V^*}+ \|A_2(t,v+Y(t))\|_{V^*}  \\
\le& C \left(f(t)^{\frac{\alpha-1}{\alpha}}+ \|v+Y(t) \|_V^{\alpha-1} \right) \left(1+ \|v+Y(t) \|_H^{\beta}  \right) \\
& +f(t)^{\frac{\alpha-1}{\alpha}}+ C\|Y(t) \|_V^{\alpha-1} \\
\le & \left(C f(t)^{\frac{\alpha-1}{\alpha}}+C\|Y(t)\|_V^{\alpha-1} +C \|v\|_V^{\alpha-1}  \right) \left(1+\|Y(t) \|_H^{\beta}+  \|v \|_H^{\beta}   \right ) \\
  \le& \left( \tilde{f}(t)^{\frac{\alpha-1}{\alpha}}+ C\|v\|_V^{\alpha-1} \right) \left(1+ \|v \|_H^{\beta}  \right).
\end{align*}

Therefore, according to Theorem \ref{1.1}, (\ref{rde})  has a unique local solution on $[0, T_0(\omega)]$ for $\mathbb{P}$-$a.s. \omega$.

%Recall that $T_0$ is a constant satisfying the following property:
%$$  0<T_0\le T \  \text{and}  \ T_0<\sup_{s\in(0,\infty)}G(s)- G\left( \|x\|_H^2+\int_0^{T_0} \tilde{f}(s) d s    \right),  $$
%where $G(s)=\int_{s_0}^s \tilde{g}(r) d r  $ for some constant $s_0>0$.

%Therefore, $T_0$ is also a stopping time.

Define
$$X(t):=u(t)+Y(t), $$
 then it is easy to show that $X(t)$ is the unique local solution to (\ref{SDE}).

Now the proof is complete.%By Remark \ref{R1} we know that the solution of (\ref{rde}) is the limit of a sequence of solutions to the corresponding finite dimensional %equations in the Galerkin approximations.  Therefore, $u(t)$ hence $X(t)$ is measurable w.r.t. $\mathcal{F}_t$.  Now the proof is complete.
\qed

\section{Application to Examples}

% for $d=4$, we only have the Sobolev embedding $ L^4\subseteq W_0^{1,2} $.
Since Theorem \ref{T1} is a generalization of a classical result for  monotone operators (cf.\cite{Ba10,Li69,Sh97,Z90})
and of  a recent result for locally monotone operators (cf.\cite{L11,LR10}), it can be applied to a large class
of semilinear and quasilinear evolution equations such as reaction-diffusion equations, generalized Burgers equations,
2D Navier-Stokes equation, 2D magneto-hydrodynamic equations, 2D magnetic B\'{e}nard problem, 3D Leray-$\alpha$
model, porous medium equations and generalized $p$-Laplace equations with locally monotone perturbations (cf.\cite{CM10,L11,LR10,PR07}).
In this section we will first  apply  our general results  to some known cases (Subsection 3.1, 3.2 and 3.3), but which  have not been covered by the more restricted framework in the above  references.
Subsequently, in Subsections 3.4 and 3.5 we apply our results to cases, which are not covered in the
existing literature, at least not in such generality.

\subsection{3D Navier-Stokes equation}
As we mentioned in the introduction,  the first  example here is  to apply Theorem \ref{T1} to the 3D Navier-Stokes equation, which is a
 classical model to  describe the time evolution of an incompressible fluid, given as follows:
\begin{equation}\label{3D NSE}
\begin{split}
    \partial_t u(t)&=\nu \Delta u(t)-(u(t)\cdot \nabla)u(t)+\nabla p(t)+f(t), \\
   \text{div} (u)&=0,   ~ u(0)=u_0,
\end{split}
\end{equation}
where $u(t,x)=(u^1(t,x), u^2(t,x), u^3(t,x))$ represents the velocity field of the fluid, $\nu$ is the viscosity constant,
the pressure $p(t,x)$
is an unknown scalar function and $f$ is  a  (known) external force field acting on the fluid. In the pioneering work
\cite{L34} Leray proved the existence of
a weak solution for the 3D Navier-Stokes equation in the whole space. However, up to now,
 the uniqueness and regularity of  weak solutions are still open problems (cf.\cite{Li96,T84}).

Let $\Lambda$ be a smooth bounded open domain in $\mathbb{R}^3$.
Let $C_0^\infty(\Lambda, \R^3)$ denote the set of all smooth functions from $\L$ to $\R^3$
with compact support.  For $p\ge 1$, let $L^p:=L^p(\L, \R^3)$ be the vector valued $L^p$-space in which the norm
is denoted by $\|\cdot\|_{L^p}$. For any integer $m\ge 0$, let $W_0^{m,2}$ be the standard Sobolev space on $\L$
with values in $\R^3$, i.e. the closure of $C_0^\infty(\Lambda, \R^3)$ with respect to the norm:
$$ \|u\|_{\W}^2 =\int_{\L} |(I-\Delta)^{\frac{m}{2}}  u|^2  \d x.  $$
%where $(I-\Delta)^{\frac{m}{2}}$ is defined by    Fourier transform.

For the reader's convenience, we recall the following Gagliardo-Nirenberg interpolation inequality,
 which plays an essential role
in the study of Navier-Stokes equations.

If $q\in [1,\infty]$ such that
$$    \frac{1}{q}= \frac{1}{2} - \frac{m\alpha}{3},\ 0\le \alpha\le 1,  $$
then there exists a constant $C_{m,q}>0$ such that
\begin{equation}\label{GN inequality}
 \|u\|_{L^q} \le C_{m,q} \|u\|_{\W}^\alpha \|u\|_{L^2}^{1-\alpha}, \ \ u\in\W.
\end{equation}

Now we define
$$  H^{m}:=\left\{ u\in\W: \  \text{div}  u=0     \right\}.$$
The norm of $\W$ restricted to $H^m$ will be denoted by $\|\cdot\|_{\H}$. We recall that
$H^0$ is a closed linear subspace of the Hilbert space $L^2(\L, \R^3)$.
 In the  literature it is well known that
one can  use the Gelfand triple $H^1 \subseteq H^0\subseteq (H^{1})^*$ to analyze the
Navier-Stokes equation and it works very well in the 2D case even with general stochastic perturbations
 (cf.\cite{BLZ,LR10,T84} and the references therein).
However, as pointed out in \cite{L11,LR10}, the growth condition $(H4)$
fails to  hold on this triple for the 3D Navier-Stokes equation.

Motivated by some recent papers on the (stochastic)  tamed 3D Navier-Stokes equation (cf. \cite{RZ10,RZZ,RZ09a,RZ09}),
we will use the following Gelfand triple in order to verify the growth condition $(H4)$:
 $$    V:=H^2 \subseteq H:=H^1\subseteq  V^* .  $$
The main reason is that we can use the following  inequality in the 3D case (see e.g.\cite{He01}):
\begin{equation}\label{Heywood}
 \sup_{x} |u(x)|^2 \le C \|\Delta u\|_{H^0} \|\nabla u\|_{H^0} .
\end{equation}
Let $\P$ be the orthogonal (Helmhotz-Leray) projection from $L^2(\L,\R^3)$ to $H^0$ (cf.\cite{T84,Li96}). It is well known that
$\P$ % commutes with the derivative operators and it
can be restricted to a bounded linear operator from
$\W$ to $\H$.  For any $u\in H^0$ and $v\in L^2(\L,\R^3)$ we have
$$ \<u, v\>_{H^0} :=  \<u, \P v\>_{H^0}=\<u, v\>_{L^2} . $$
Then by means of  the divergence free Hilbert
spaces $H^2, H^1$ and the  orthogonal projection $\P$,  the
classical  3D Navier-Stokes equation (\ref{3D NSE}) can be reformulated in
the following abstract form:
\begin{equation}\label{NSE}
u'=Au+B(u)+F,\ u(0)=u_0\in H^1,
\end{equation}
where
$$ A: H^2\rightarrow V^*,\    Au=\nu \P \Delta u;$$
  $$ B: H^2\times H^2\rightarrow V^*,   \  B(u,v)=-\P \left[(u \cdot \nabla) v\right],\  B(u)=B(u,u); $$
  $$ F: [0,T] \rightarrow H^0$$
are well defined.

\begin{rem} (1)  It is obvious that $H^0\subseteq L^2(\L,\R^3)\subseteq V^*$ and
$$    \|u\|_{V^*} \le \|u\|_{L^2}=\|u\|_{H^0},\ u\in H^0.     $$
(2)  It is well known that
$$ \<B(u,v),w\>_{L^2}=-\<B(u,w),v\>_{L^2}, \   \<B(u,v),v\>_{L^2}=0,\   u,v,w\in H^2. $$
However, one should note that
$$ \<B(u,v),v\>_{H^2}:=   { }_{H^0} \<B(u,v), v\>_{H^2}=\<B(u,v), (I-\Delta)v\>_{L^2},\   u,v,w\in H^2,  $$
which might  not be  equal to $0$ in general.

Therefore,  it is not obvious whether the usual coercivity condition
still holds on this new triple or not?   In fact,  this is  one reason that  we introduce  a generalized coercivity
condition in order to handle this nonlinear term using  this new triple.
\end{rem}

For simplicity we only apply Theorem \ref{T1} to the deterministic 3D Navier-Stokes equation.
But  one can also add a general type additive noise to (\ref{NSE}) and obtain
the corresponding result in the stochastic case by applying Theorem \ref{T1.1} and Remark \ref{r1}.

\begin{exa}(3D Navier-Stokes equation)
If $F\in L^2(0,T; H^0)$ and $u_0\in H^1$, then there exists a constant $T_0\in (0, T]$ such that
 $(\ref{NSE})$ has a unique strong solution $u\in L^2([0,T_0]; H^2)\cap C([0,T_0]; H^1)$.

In particular, it is enough to choose   $T_0\in(0,T]$ such that   the following property holds:
$$  T_0< \frac{C}{  \|u_0\|_{H^1}^2 +\int_0^{T_0} (1+\|F(t)\|_{L^2}^2) d t }\   , $$
where $C>0$ is  some  (given)  constant only depending on the viscosity constant $\nu$.
\end{exa}
\begin{proof} The hemicontinuity $(H1)$ is easy to verify  since $B$ is a bilinear map.

By (\ref{Heywood}) and Young's inequality   we have
\begin{equation}\label{e7}
 \begin{split}
&~  \<B(u)-B(v),u-v\>_V \\
=& \<B(u)-B(v), (I-\Delta)(u-v)\>_{L^2}  \\
 \le& \|u-v\|_{V}  \|(u\cdot \nabla)u - (v\cdot \nabla) v\|_{L^2}  \\
 \le&  \|u-v\|_{V}  \left( \|u\|_{L^\infty} \|\nabla u- \nabla v  \|_{L^2} +\|u-v\|_{L^\infty} \|\nabla v\|_{L^2} \right)  \\
 \le&  \|u-v\|_{V}  \left( \|u\|_{L^\infty} \| u-  v  \|_{H} +C\|u-v\|_{V}^{1/2} \|u-v\|_{H}^{1/2} \| v\|_{H} \right)  \\
 \le&  \frac{\nu}{2} \|u-v\|_V^{2} + C \left( \|u\|_{L^\infty}^2 +\|v\|_{H}^4 \right)   \|u-v\|_H^{2},
\   u, v\in V,
 \end{split}
 \end{equation}
where $C>0$ is a constant only depending on $\nu$.

Hence we have the following  local monotonicity $(H2)$:
\begin{equation*}
 \begin{split}
  &  \<Au+B(u)-Av-B(v),u-v\>_V \\
\le& -\frac{\nu}{2} \|u-v\|_V^{2} + \nu  \|u-v\|_H^{2}
  +C \left( \|u\|_{L^\infty}^2 +\|v\|_{H}^4 \right)   \|u-v\|_H^{2},\ u,v\in V.
 \end{split}
\end{equation*}
In particular, there exists a constant $C$ such that  (let $u=0$)
$$  \<Av+B(v), v\>_V
\le -\frac{\nu}{2} \|v\|_V^{2} +  C (1+\|v\|_{H}^6),\   v\in V. $$
Then  it is easy to show that  $(H3)$ holds with $g(x)=C x^3$:
\begin{equation*}
\begin{split}
  \<Av+B(v)+F, v\>_V&\le -\frac{\nu}{2} \|v\|_V^2+ C(1+ \|v\|_{H}^6 )  +\|F\|_{V^*}\|v\|_V \\
&\le
  -\frac{\nu}{4}\|v\|_V^2+ C  \|v\|_{H}^6 + C \left( 1 +    \|F\|_{L^2}^2 \right), \  v\in V.
\end{split}
\end{equation*}
Note that by (\ref{Heywood}) we have
\begin{equation}\label{e8}
  \|B(v)\|_{V^*}^2\le \|(v\cdot \nabla)v\|_{L^2}^2\le \|v\|_{L^\infty}^2 \|\nabla v\|_{L^2}^2
\le C \|v\|_V \|v\|_H^3 \le C \|v\|_V^2 \|v\|_H^2,  \ v\in V.
\end{equation}
Hence  $(H4)$ holds with $\beta=2$.

Then  the local  existence and uniqueness of solutions to (\ref{NSE}) follows from Theorem \ref{T1}.
\end{proof}

\begin{rem} Note that the solution here is a strong solution  in the sense of PDE.
  It is obvious that we can also allow $F$ in (\ref{NSE}) to depend on the unknown solution $u$ provided
 $F$ satisfies some
locally monotone condition (cf.\cite{L11}).
\end{rem}

\begin{rem} If we  analyze  (\ref{NSE}) by using the following Gelfand triple
$$  V:=H^2 \subseteq H:= H^0 \subseteq V^* ,  $$
 then  $ \<B(v), v\>_V=0$ and we have the classical coercivity  (i.e. $(H3)$ with $g(x)=Cx$):
$$ \<Av+B(v)+F, v\>_V\le -\nu\|v\|_V^2+\nu \|v\|_H^2+ \|F\|_{V^*}\|v\|_V \le
  -\frac{\nu}{2}\|v\|_V^2+\nu \|v\|_H^2+\frac{1}{2 \nu}\|F\|_{V^*}^2, \  v\in V.   $$
By (\ref{Heywood}) and Young's inequality we have
\begin{equation}
 \begin{split}
  \<B(u)-B(v),u-v\>_V &=-  \<B(u,u-v),v\>_V+  \<B(v,u-v),v\>_V \\
 &= -  \<B(u-v),v\>_V \\
 &\le   \|u-v\|_{L^\infty} \|\nabla(u-v)\|_{L^2}   \|v\|_{L^\infty } \\
&\le   \|u-v\|_{H}^\frac{1}{2} \|\nabla(u-v)\|_{L^2}^{\frac{3}{2}}   \|v\|_{L^\infty } \\
&\le   \|u-v\|_{H}^\frac{5}{4} \| u-v \|_{V}^{\frac{3}{4}}   \|v\|_{L^\infty } \\
& \le \frac{\nu}{2} \|u-v\|_V^{2} +  C  \|v\|_{L^\infty}^{\frac{8}{5}} \|u-v\|_H^{2},
\ u,v\in V.
 \end{split}
\end{equation}
Hence we have the local monotonicity $(H2)$:
$$  \<Au+B(u)-Av-B(v),u-v\>_V
\le -\frac{\nu}{2} \|u-v\|_V^{2} + C\left(1+  \|v\|_{L^\infty }^{\frac{8}{5}} \right) \|u-v\|_H^{2}.  $$
Concerning the growth condition   we have,
\begin{equation}
 \begin{split}
   \|B(u)\|_{V^*}^2 & \le  \|u\|_{L^\infty}^2 \|\nabla u\|_{L^2}^2 \\
 &  \le C \|u\|_V \|\nabla u\|_{L^2}^{3}  \\
& \le C\|u\|_V^{\frac{5}{2}} \|u\|_H^{\frac{3}{2}} ,  \ u\in V.
 \end{split}
\end{equation}
However,  this is  not enough to verify $(H4)$.
\end{rem}

\subsection{Tamed 3D Navier-Stokes equation}
In the case of  the 3D Navier-Stokes equation we see that the generalized coercivity condition holds with $g(x)=Cx^3$, hence we only
get  local existence and uniqueness of solutions. In this part  we consider  a tamed version of the (stochastic) 3D Navier-Stokes equation,
which was proposed recently in \cite{RZ09a,RZ09} (see also \cite{RZZ,RZ10}). The main feature of this tamed equation
is that if there is a bounded smooth solution to the classical 3D Navier-Stokes equation (\ref{3D NSE}), then this smooth solution must also
satisfy the following  tamed equation (\ref{Tamed NSE}) (for  $N$ large enough):
\begin{equation}\label{Tamed NSE}
\begin{split}
   &  \partial_t u(t)=\nu \Delta u(t)-(u(t)\cdot \nabla)u(t)+\nabla p(t) -g_N\left(|u(t)|^2\right) u(t)  +F(t), \\
   & \text{div} (u)=0,   ~ u(0)=u_0,\\
  &  u|_{\partial \Lambda}=0,
\end{split}
\end{equation}
where the taming function $g_N : \mathbb{R}_+\rightarrow
\mathbb{R}_+$ is smooth and satisfies for some $N > 0$,
$$\begin{cases} & g_N(r)=0,\  \text{if}\  r\le N,\\
 & g_N(r)=(r-N)/\nu,\   \text{if}\  r\ge N+1,\\
 & 0\le g_N^\prime(r)\le C,\  \  r\ge 0.
\end{cases}
$$

\begin{exa}(Tamed 3D Navier-Stokes equation)
For $F\in L^2(0,T; H^0)$ and $u_0\in H^1$,
 $(\ref{Tamed NSE})$ has a unique  strong  solution  $u\in L^2([0,T]; H^2)\cap C([0,T]; H^1)$.
\end{exa}
\begin{proof} Without loss of generality we may assume $\nu=1$ for simplicity.

 Using the Gelfand triple
 $$    V:=H^2 \subseteq H:=H^1\subseteq  V^*,$$
(\ref{Tamed NSE}) can be  rewritten  in
the abstract form:
$$   u'=Au+B(u)- \P\left[g_N\left(|u|^2\right) u \right] +F,\ u(0)=u_0\in H^1,  $$
We recall the following estimates  for $v\in H^2$ from the proof of \cite[Lemma 2.3]{RZ09}:
\begin{equation}
\begin{split}
  \<Av, v\>_V&=\<\P\Delta v, (I-\Delta) v\>_{L^2}\le -\|v\|_{V}^2+ \|v\|_{H}^2 ;\\
\<B(v), v\>_V&=-\<\P(v \cdot \nabla)v,  (I-\Delta)v\>_{L^2} \le \frac{1}{4} \|v\|_{V}^2+\frac{1}{2}\| |v|\cdot |\nabla v| \|_{L^2}^2 ;\\
-\<\P\left[g_N(|v|^2)v \right], v\>_V&=-\<\P\left[g_N(|v|^2)v \right], (I-\Delta)v\>_{L^2} \le   -\| |v|\cdot |\nabla v| \|_{L^2}^2+ CN \|v\|_{H}^2.
\end{split}
\end{equation}
Then  it is  easy to get the following coercivity $(H3)$ with $g(x)=C(N+1)x$:
$$ \<Av+B(v)-\P\left[g_N(|v|^2)v \right]+F, v\>_V\le -\frac{1}{2}  \|v\|_V^2 + C(N+1) \|v\|_{H}^2  + C\|F\|_{V^*}^2,  \  v\in V.   $$
By (\ref{Heywood})  we have
\begin{equation}
 \begin{split}
&~  -\<\P\left[g_N(|u|^2)u \right] -\P\left[g_N(|v|^2)v \right], u-v\>_V \\
=& -\<\P\left[g_N(|u|^2)u \right] -\P\left[g_N(|v|^2)v \right], (I-\Delta)(u-v)\>_{L^2}  \\
 \le& \|u-v\|_{V}  \|g_N(|u|^2)u -g_N(|v|^2)v\|_{L^2}  \\
 \le& \|u-v\|_{V}  \| \left(g_N(|u|^2)- g_N(|v|^2)   \right)u -g_N(|v|^2) \left(u- v \right)\|_{L^2}  \\
 \le&  \|u-v\|_{V}  \left(C \|u-v\|_{L^\infty} \| |u|^2+ | v|^2  \|_{L^2} +C\||v|^2\|_{L^2}\|u-v\|_{L^\infty} \right)  \\
 \le&  C \|u-v\|_{V}^{\frac{3}{2}} \|u-v\|_{H}^{\frac{1}{2}}  \left( \|u\|_{L^4}^2 + \|  v  \|_{L^4}^2  \right)  \\
 \le&  \frac{1}{4} \|u-v\|_V^{2} + C \left( \|u\|_{L^4}^8 +\|v\|_{L^4}^8 \right)   \|u-v\|_H^{2},
\   u, v\in V,
 \end{split}
 \end{equation}
where $C$ is constant changing from line to line.

Hence by (\ref{e7})  we have the following  estimate  (note that $\nu=1$):
\begin{equation}
 \begin{split}
&  \<Au+B(u)-\P\left[g_N(|u|^2)u \right] -Av-B(v)+\P\left[g_N(|v|^2)v \right], u-v\>_V \\
\le& -\frac{1}{4} \|u-v\|_V^{2} +
 C \left( 1+\|u\|_{L^\infty}^2+\|u\|_{L^4}^8 + \|v\|_{H}^4+ \|v\|_{L^4}^8 \right)   \|u-v\|_H^{2},\ u,v\in V,
 \end{split}
\end{equation}
i.e. $(H2)$ holds with $\rho(u)=\|u\|_{L^\infty}^2+\|u\|_{L^4}^8$ and $\eta(v)= \|v\|_{H}^4+ \|v\|_{L^4}^8$.

By (\ref{GN inequality}) we have
$$  \|\P\left[g_N(|v|^2)v \right]\|_{V^*}^2\le C \|v\|_{L^6}^2 \le C \|v\|_H^2,
 v\in V. $$
Then by (\ref{e8}) we obtain that   $(H4)$ holds with $\beta=2$.

Since (\ref{c3}) also holds,    the  global  existence and uniqueness of solutions to (\ref{Tamed NSE}) follows from Theorem \ref{T1}.
\end{proof}

\subsection{Cahn-Hilliard equation}
The Cahn-Hilliard equation is a classical model to describe phase separation in a binary alloy and some other media, we refer to \cite{N98} for a survey on this model
(see also \cite{EM91,DD96} for the stochastic case).
Let $\Lambda$ be a bounded open domain in $\R^d ~ (d\le 3)$ with smooth boundary. The  Cahn-Hilliard equation
has the following form:
\begin{equation}\label{Cahn-Hilliard}
\begin{split}
 &\partial_t u=-\Delta^2 u+ \Delta\varphi(u), \  u(0)=u_0, \\
& \frac{\partial}{\partial n} u= \frac{\partial}{\partial n} (\Delta u)=0   \  \  \text{on}\  \  \partial \Lambda,
\end{split}
\end{equation}
where $\Delta$ is the Laplace operator, $n$ is the outward unit normal vector on the boundary $\partial \Lambda$
and  the nonlinear term $\varphi$  is some polynomial function.

Now we consider the following Gelfand triple
$$ V \subseteq H:= L^2(\Lambda) \subseteq V^*,   $$
where $V:=\{ u\in W^{2,2} (\Lambda):  \  \frac{\partial}{\partial  n} u= \frac{\partial}{\partial  n} (\Delta u)=0
  \  \text{on} \ \partial \Lambda  \} $.

Then we get the following existence and uniqueness result for (\ref{Cahn-Hilliard}).

\begin{exa} Suppose that $\varphi\in C^1(\R)$ and  there exist  some positive constants $C$ and   $p\le \frac{d+4}{d}$ such that
\begin{align*}
& \varphi^\prime(x) \ge -C, \   |\varphi(x)|  \le C(1+|x|^p),  \  x\in\R; \\
& |\varphi(x)-\varphi(y) |\le C(1+|x|^{p-1}+|y|^{p-1}) |x-y|, \  x,y\in\R.
\end{align*}
Then for any $u_0\in L^2(\Lambda)$, there exists a unique solution to  $(\ref{Cahn-Hilliard})$.
\end{exa}
\begin{proof}
For any $u,v\in V$, we have
$$ - \<\Delta^2 u- \Delta^2 v , u-v\>_V=-\|u-v\|_V^2. $$
By  the assumptions on $\varphi$ and  Young's inequality we get
\begin{align*}
 &  \<\Delta \varphi(u)- \Delta \varphi(v) , u-v\>_V \\
\le &   \|u-v\|_{V} \|\varphi(u)-\varphi(v)\|_{L^2}  \\
\le &  \|u-v\|_V  \cdot  C\left(   1+ \|u\|_{L^\infty}^{p-1} +\|v\|_{L^\infty}^{p-1}    \right) \|u-v\|_{L^2}     \\
\le& \frac{1}{2} \|u-v\|_V^2 + C\left(   1+ \|u\|_{L^\infty}^{2p-2} +\|v\|_{L^\infty}^{2p-2}    \right) \|u-v\|_{H}^2,
\  u,v\in V.
\end{align*}
Hence $(H2)$ holds with $\rho(u)=\eta(u)=C\|u\|_{L^\infty}^{2p-2}$.

Similarly, by the interpolation inequality we have  for any $v\in V$,
\begin{align*}
 \<\Delta \varphi(v) , v\>_V
=- \int_\Lambda \varphi^\prime(v) |\nabla v|^2 d x
\le  C\|v\|_{W^{1,2}}^2
\le \frac{1}{2} \|v\|_V^2 + C\|v\|_{H}^2,
\end{align*}
i.e. $(H3)$ holds with $\alpha=2$ and $g(x)=Cx$.

 It is also easy to see that
\begin{align*}
  \|\Delta \varphi(v)\|_{V^*}  &\le  \|\varphi(v) \|_{H} \\
 & \le C\left( 1+\|v\|_{L^{2p}}^p \right) \\
& \le C \left(1+ \|v\|_V^{\frac{(p-1)d}{4}}   \|v\|_H^{\frac{d+(4-d)p}{4}}  \right) \\
& \le C\left(1+ \|v\|_V^{\frac{(p-1)d}{4}}   \right ) \left(1+  \|v\|_H^{\frac{d+(4-d)p}{4}}  \right),
\ v\in V.
\end{align*}
Since  $p\le \frac{4}{d}+1$ (i.e. $\frac{(p-1)d}{4}\le 1$) and $\|v\|_H\le C \|v\|_V$,  we have
$$   \|\Delta \varphi(v)\|_{V^*}\le C \left(1+ \|v\|_V  \right ) \left(1+  \|v\|_H^{p-1}  \right),
\ v\in V, $$
i.e. $(H4)$  holds with $\beta=p-1$.

Note that for any $v\in V$,
$$ \rho(v)= C\|v\|_{L^\infty}^{2p-2} \le C \|v\|_{V}^{\frac{(p-1)d}{2}}  \|v\|_H^{\frac{(p-1)(4-d)}{2}},  $$
i.e. (\ref{c3}) also holds.

Therefore, the  conclusion follows directly from Theorem \ref{T1}.
\end{proof}

\subsection{Surface growth PDE with random noise}
We consider a model which appears in the theory of growth of surfaces,
which describes an amorphous material deposited on an initially flat surface in high vacuum (cf.\cite{RLH,BFR09} and the references therein).
 Taking account of random noises   the equation is formulated on the interval $[0, L]$ as follows:
\begin{equation}\label{surface growth PDE}
d X(t)=\left[ -\partial_x^4 X(t)-\partial_x^2 X(t)+\partial_x^2(\partial_x X(t))^2 \right] d t +B(t) d W(t), \  X(0)=x_0,
\end{equation}
where $\partial_x, \partial_x^2, \partial_x^4$ denote the first, second and fourth spatial derivatives respectively.

Recall that $W(t)$ is a $U$-valued cylindrical Wiener process.
Using the following Gelfand triple
$$  V:=W_0^{4,2}([0,L]) \subseteq H:= W^{2,2}([0,L]) \subseteq V^*   $$
we can obtain the following local existence and uniqueness of strong solutions for (\ref{surface growth PDE}).

\begin{exa} Suppose that $B\in L^2([0,T]; L_2(U; H))$.
For any $X_0\in L^2(\Omega\rightarrow H; \mathcal{F}_0; \mathbb{P})$, there exists a unique local solution $\{X(t)\}_{t\in[0,\tau]}$ to (\ref{surface growth PDE})  satisfying
$$  X(\cdot)\in L^2([0,\tau]; V)\cap C([0,\tau]; H^2), \mathbb{P}\text{-a.s.}. $$
\end{exa}
\begin{proof} It is sufficient to verify $(H1)$-$(H4)$ for (\ref{surface growth PDE}), then the conclusion follows from Theorem \ref{T1.1}.

% It is very easy to show that $(H1)$ holds.

For $u,v\in V$,  by  standard interpolation inequalities and Young's inequality
 we have
\begin{align*}
 &  \<\partial_x^2(\partial_x u)^2- \partial_x^2(\partial_x v)^2, u-v\>_V \\
=& \<\partial_x^2(\partial_x u)^2- \partial_x^2(\partial_x v)^2, \partial_x^4 u-\partial_x^4 v\>_{L^2} \\
\le & \|u-v\|_V  \| \partial_x^2(\partial_x u)^2- \partial_x^2(\partial_x v)^2\|_{L^2} \\
\le & \|u-v\|_V \left[ \| (\partial_x^2 u)^2- (\partial_x^2 v)^2\|_{L^2}+
                 \| \partial_x u \partial_x^3 u - \partial_x v \partial_x^3 v \|_{L^2}     \right] \\
\le & \|u-v\|_V \left[ (\|\partial_x^2 u\|_{L^\infty} +\|\partial_x^2 v\|_{L^\infty}) \|u-v\|_H
    + \|\partial_x u\|_{L^\infty} \|\partial_x^3 u- \partial_x^3 v\|_{L^2}
    +\|\partial_x^3 v\|_{L^2} \|\partial_x u- \partial_x v\|_{L^\infty}   \right] \\
  \le   & \|u-v\|_V \left[ (\|\partial_x^2 u\|_{L^\infty} +\|\partial_x^2 v\|_{L^\infty}) \|u-v\|_H
    + \|\partial_x u\|_{L^\infty} \| u- v\|_{V}^{\frac{1}{2}}  \| u- v\|_{H}^{\frac{1}{2}}
    +\|\partial_x^3 v\|_{L^2} \|u- v\|_{H}   \right] \\
   \le & \frac{1}{4}\|u-v\|_V^2 +C\left(   \| u\|_{W^{2,\infty}}^2+\| u\|_{W^{1,\infty}}^4 +
   \| v\|_{W^{2,\infty}}^2 +\| v\|_{W^{3,2}}^2 \right) \|u-v\|_H^2,
\end{align*}
where $C$ is some constant.

Note that
\begin{align*}
 &  \<-\partial_x^4 u- \partial_x^2u + \partial_x^4 v +\partial_x^2 v, u-v\>_V \\
\le& -\|u-v\|_V^2 + \|u-v\|_V \|u-v\|_H \\
\le& -\frac{3}{4} \|u-v\|_V^2 + \|u-v\|_H^2.
\end{align*}
Hence we know that $(H2)$ holds with
$$  \rho(u)=  \| u\|_{W^{2,\infty}}^2+\| u\|_{W^{1,\infty}}^4, \
\eta(v)=\| v\|_{W^{2,\infty}}^2 +\| v\|_{W^{3,2}}^2.   $$
Similarly,
\begin{align*}
 \|\partial_x^2(\partial_x v)^2\|_{V^*} \le & \| (\partial_x^2 v)^2 + \partial_x v \partial_x^3 v\|_{L^2} \\
\le & \|v\|_{W^{2,4}}^2 + \|v\|_{W^{1,\infty}} \|v\|_{W^{3,2}} \\
\le & C \|v\|_V^{\frac{1}{2}} \|v\|_{H}^{\frac{3}{2}}, \  v\in V,
\end{align*}
i.e. $(H4)$ holds with $\beta=1$.

Moreover, this also implies that
$$2 \<\partial_x^2(\partial_x v)^2, v\>_V \le 2 \|v\|_V  \|\partial_x^2(\partial_x v)^2\|_{V^*}  \le C\|v\|_V^{\frac{3}{2}}\|v\|_H^{\frac{3}{2}}
\le\frac{1}{2}\|v\|_V^2 + C\|v\|_H^6.
 $$
 Since
 $$2 \<-\partial_x^4 v -\partial_x^2 v, v\>_V \le
\frac{3}{2}\|v\|_V^2 + \|v\|_H^2,
 $$
we deduce that $(H3)$ holds with $g(x)=C x^3$.

Now the proof is complete.
\end{proof}

\begin{rem} (1) It is  known in the literature  that the (1-dimension) surface growth model has some similar features of  difficulty
as the 3D Navier-Stokes equation,  the uniqueness of weak solutions for this model  is still an open problem  in both the  deterministic  and stochastic case.
From the proof above one can see these similarities (e.g.$(H2)$-$(H4)$)  very clearly  between this model and the  3D Navier-Stokes equation (Example 3.1).

(2) The solution obtained here for the stochastic surface growth model  is a strong solution  in the sense of both PDE and SPDE.
We should remark that for the space time white noise case, the existence of a weak martingale solution was obtained by Bl\"{o}mker,
Flandoli and Romito  in \cite{BFR09} for this model, and the existence of a Markov selection and ergodicity properties
were also proved there.
\end{rem}

\subsection{Stochastic power law fluids}
The next example of (S)PDE is a model which describes the velocity field of a viscous and
 incompressible non-Newtonian fluid subject to some random forcing.
The deterministic model has been studied intensively in PDE theory (cf.\cite{FR,MNRR} and the references therein).
Let $\Lambda$ be a  bounded domain in $\mathbb{R}^d (d\ge 2)$  with sufficiently smooth boundary.
For a vector field  (e.g. the velocity field  of the fluid)  $u: \Lambda\rightarrow \R^d$,
we denote the rate of strain tensor by
$$ e(u): \Lambda\rightarrow \R^d\otimes \R^d; \   e_{i,j} (u)=\frac{\partial_i u_j+ \partial_j u_i}{2},
\ i,j=1,\cdots, d.  $$
In this paper we consider the case  that the extra stress tensor has the following polynomial form:
$$ \tau(u): \Lambda \rightarrow\R^d\otimes\R^d; \ \tau(u)=2\nu (1+|e(u)|)^{p-2} e(u),   $$
where   $\nu>0$ is the kinematic viscosity and $p>1$ is some constant.

In the case of deterministic forcing,  the dynamics of  power law fluids can be modeled   by the following PDE:
\begin{equation}\label{power law fluids}
 \begin{split}
& \partial_t u= \text{div} \left(\tau(u) \right)- (u\cdot \nabla)u-\nabla p +f , \\
&  \text{div}  (u)=0,
 \end{split}
\end{equation}
where $u=u(t,x)=\left( u_i(t,x) \right)_{i=1}^d$ is the velocity field, $p$ is the pressure,
$f$ is some external force and
$$ u\cdot \nabla=\sum_{j=1}^d u_j \partial_j, \  \
 \text{div}\left( \tau (u) \right)= \left( \sum_{j=1}^d \partial_j \tau_{i,j} (u)  \right)_{i=1}^d.  $$
\begin{rem}
(1) Note that $p=2$ describes the Newtonian fluids and (\ref{power law fluids}) reduces to  the classical
Navier-Stokes equation (\ref{3D NSE}).

(2) The shear shining fluids (i.e. $p\in(1,2)$) and the shear thickening fluids (i.e.  $p\in (2, \infty)$) has been also widely
studied in different fields of science and engineering (cf.\cite{FR,MNRR}).
\end{rem}

Now we consider the following Gelfand triple
$$ V\subseteq H \subseteq V^*,   $$
where
$$ V=\left\{ u\in W_0^{1,p}(\Lambda; \R^d):\   \text{div}  u=0    \right\};
\  H=\left\{ u\in L^2(\Lambda; \R^d):\   \text{div}  u=0    \right\}.  $$
Let $\mathcal{P}$ be the orthogonal (Helmhotz-Leray) projection from
 $L^2(\L,\R^d)$ to $H$.
It is well known that  the following operators
$$  A: W_0^{2,p}(\Lambda; \R^d)\cap V \rightarrow H, \ A(u)= \mathcal{P} \left[  \text{div}( \tau( u ) ) \right] ; $$
$$        B: W_0^{2,p}(\Lambda; \R^d)\cap V \times W_0^{2,p}(\Lambda; \R^d)\cap V  \rightarrow H; \  B(u, v)=- \mathcal{P} \left[ (u\cdot \nabla) v   \right],  \ B(u):=B(u,u)  $$
can be extended to the   well defined operators:
$$     A: V\rightarrow V^*; \ B: V\times V\rightarrow V^*.              $$
In particular, one can show that
$$     \<A(u),  v\>_V = -  \int_\Lambda \sum_{i,j=1}^d  \tau_{i,j}(u) e_{i,j}(v) \d x;\ u,v\in V;     $$
$$       \<B(u,v),  w\>_V=  -   \<B(u,w),  v\>_V, \   \<B(u,v),  v\>_V=0, \ u,v,w\in V.         $$
Now (\ref{power law fluids}) can be reformulated in the following variational form:
\begin{equation}\label{PLF}
u^\prime (t)=  A(u(t))+B(u(t))+ F(t) , \  u(0)=u_0.
 \end{equation}
\begin{exa}
 Suppose that $u_0\in H, F\in L^2([0,T]; V^*)$ and $p\ge \frac{3d+2}{d+2}$, then
$(\ref{PLF})$ has a solution. Moreover, if $p\ge \frac{d+2}{2}$, then the solution of
$(\ref{PLF})$ is also unique.
\end{exa}

\begin{proof} Without loss of generality we may assume $\nu=1$.

We first recall the well known Korn's inequality for $p\in (1, \infty)$:
$$  \int_\Lambda |e(u)|^p \d x \ge C_p \|u\|_{1,p}, \  u\in W_0^{1,p}(\Lambda; \R^d),  $$
where  $C_p>0$ is  some constant.

The following inequalities are also used very often in the study of power law fluids (cf. \cite[pp.198 Lemma 1.19]{MNRR}):
\begin{equation}\label{e11}
 \begin{split}
&  \sum_{i,j=1}^d \tau_{i,j}(u) e_{i,j} (u)\ge C(|e(u)|^p-1); \\
&   \sum_{i,j=1}^d  (\tau_{i,j} (u)-\tau_{i,j} (v))(e_{i,j}(u)- e_{i,j}(v))\ge C\left(|e(u)-e(v)|^2 +|e(u)-e(v)|^p  \right);\\
& |\tau_{i,j} (u) | \le C(1+|e(u)|)^{p-1}, \ i,j=1, \cdots, d.
\end{split}
\end{equation}
Then by the interpolation inequality and Young's inequality  one can show  that
\begin{equation*}
 \begin{split}
 & ~~ \<B(u)-B(v),u-v\>_V\\
 &= -  \<B(u-v), v\>_V \\
&=  \<B(u-v,v), u-v\>_V \\
&\le C   \|v\|_{V}  \|u-v\|_{\frac{2p}{p-1}}^2  \\
 &\le C  \|v\|_{V}  \|u-v\|_{1,2}^{\frac{d}{p}} \|u-v\|_H^{\frac{2p-d}{p}}  \\
& \le \varepsilon \|u-v\|_{1,2}^{2} + C_\varepsilon \|v\|_{V}^{\frac{2p}{2p-d}} \|u-v\|_H^{2},
\ u,v\in V.
 \end{split}
\end{equation*}
By (\ref{e11}) and Korn's inequality  we have
\begin{equation*}
 \begin{split}
 &  \<A(u)-A(v), u-v\>_V\\
=&- \int_\Lambda \sum_{i,j=1}^d
\left( \tau_{i,j}(u) -\tau_{i,j}(v)\right) \left( e_{i,j}(u)-e_{i,j}(v) \right) \d x\\
\le& -C\|e(u)-e(v)\|_H^2\\
\le& -C\|u-v\|_{1,2}^2.
 \end{split}
\end{equation*}
Hence we have the following estimate:
$$  \<Au+B(u)-Av-B(v), u-v\>_V
 \le -(C-\varepsilon) \|u-v\|_{1,2}^{2} + C_\varepsilon \|v\|_{V}^{\frac{2p}{2p-d}} \|u-v\|_H^2, $$
i.e.  $(H2)$ holds with $\rho(v)=C_\varepsilon \|v\|_{V}^{\frac{2p}{2p-d}}$.

It is also easy to  verify $(H3)$ with $\alpha=p$ as follows:
$$  \<A(v)+B(v),  v\>_V\le -C_1\int_\Lambda |e(v)|^p \d x+ C_2\le -C_3\|v\|_V^p+C_2.   $$

Note that
$$
 \left| \<B(v),u\>_V \right|
= \left|  \<B(v,u), v\>_V \right| \le   \|u\|_{V}  \|v\|_{\frac{2p}{p-1}}^2,
\ u,v\in V,
 $$
hence we have
$$ \|B(v)\|_{V^*}\le   \|v\|_{\frac{2p}{p-1}}^2, \ v\in V.$$
Then by the interpolation inequality and Sobolev's inequality we have
$$    \|v\|_{\frac{2p}{p-1}}\le \|v\|_q^\gamma \|v\|_2^{1-\gamma} \le C \|v\|_V^\gamma\|v\|_H^{1-\gamma},    $$
where $q=\frac{dp}{d-p}$ and $\gamma=\frac{d}{(d+2)p-2d}$.

Note that $2\gamma\le p-1$ if $p\ge \frac{3d+2}{d+2}$, and it is also easy to see that
$$ \|A(v)\|_{V^*} \le C (1+\|v\|_V^{p-1}), \ v\in V. $$
 Hence  the growth condition $(H4)$ also holds.

Then  the   existence  of solutions to (\ref{PLF}) follows from Theorem \ref{T1}.
Moreover, if $d\ge \frac{2+d}{2}$, then (\ref{c3})  holds and hence the solution of (\ref{PLF})
is  unique.
\end{proof}

Now we consider the power law fluids with state-dependent random forcing which can be described by the following SPDE:
\begin{equation}\label{stochastic PLF}
 \d X(t)= \left( A(X(t))+B(X(t)) \right) \d t + Q(X(t)) \d W(t), \  X(0)=X_0,
\end{equation}
where $W(t)$ is a cylindrical Wiener process on a Hilbert space $U$ w.r.t. the filtered probability space $(\Omega,\F,  \F_t, \P)$.

The following result can be obtained similarly as in the previous example  using  \cite[Theorem 1.1]{LR10}.
\begin{exa}\label{E6}
Suppose that $p\ge \frac{2+d}{2}$,  $X_0\in L^4(\Omega\rightarrow H; \mathcal{F}_0, \mathbb{P})$ and $Q$ is a
map from $V$ to $L_{2}(U; H)$   such that
\begin{equation}\begin{split}
 \|Q(u)-Q(v)\|_{2}^2
     \le C \|u-v\|_H^2, \
u, v\in V.
\end{split}
\end{equation}
Then
 $(\ref{stochastic PLF})$ has a unique  strong  solution
 $X\in L^4([0,T]\times\Omega, \d t\times\mathbb{P}, V)\cap L^4(\Omega, \mathbb{P}, C([0,T]; H))$.
\end{exa}
\begin{rem} In \cite{TY11} the authors established the existence and uniqueness of weak solutions for (\ref{PLF})
with additive Wiener noise. They first  considered  the Galerkin approximation and  showed the tightness of the distributions
 of the corresponding approximating solutions.
Then they proved that the limit is a weak solution of (\ref{PLF}) with additive Wiener noise.

Here we apply the main result (Theorem \ref{T1})  directly  to (\ref{PLF})
and establish the existence and uniqueness of  solutions. Therefore, by Theorem \ref{T1.1} and Remark \ref{r1} we
can obtain the existence and uniqueness of strong solutions (in the sense of SPDE) for (\ref{PLF}) with general additive type noises.
Moreover, as just showed in Example \ref{E6},  we can also even prove the analogous result for  (\ref{PLF})  with  multiplicative Wiener noise.
 \end{rem}

 \section*{Acknowledgements}
 The authors would like to thank Wilhelm Stannat for drawing our attention to the stochastic
 surface growth model, and also thank  Rongchan Zhu and Xiangchan Zhu for their helpful discussions.
 This work was supported by the DFG through
the SFB-701, and the IGK ``Stochastic and Real World Models" (IRTG 1132) as well as the BiBoS Research Center.

% -------------------------------------------------------------------------------------------
%                         bibliography
% -------------------------------------------------------------------------------------------
%\bibliographystyle{amsalpha}
\bibliographystyle{amsplain}
%\bibliographystyle{plain}
%\bibliographystyle{plain}
%\bibliographystyle{development}
%\bibliographystyle{jtbnew}
%\bibliographystyle{neuron}
%\bibliographystyle{cell}
%\bibliographystyle{decsci}

%\bibliography{liu}   % expects file "refs.bib"

%------------------------------------------------------------------------------------------------------------------------
%--------------------------------------      References     ------------------------------------------------
%----------------------------------------------------------------------------------------------------------------------
% \input{bib}

\end{document}